\documentclass[a4article,10pt]{amsart}
\usepackage[english]{babel}
\usepackage{amsmath,amsthm}
\usepackage{amsfonts}
\usepackage{comment}
\usepackage[shortlabels]{enumitem}
\usepackage{graphicx}
\usepackage[dvipsnames,usenames]{color}
\usepackage[dvipsnames]{xcolor}
\usepackage[pagebackref,hypertexnames=false,colorlinks=true,
            linkcolor=NavyBlue,
            citecolor=NavyBlue,
            urlcolor=NavyBlue]{hyperref} 
\usepackage{cleveref}
\usepackage{subcaption}
\usepackage{color}
\usepackage[all]{xy}
\usepackage{thmtools, thm-restate}
\usepackage{overpic}
\usepackage{amssymb,tikz}
\usepackage{quiver}
\usetikzlibrary{knots,decorations.pathreplacing}
\usepackage{multicol}
\usetikzlibrary{arrows.meta, positioning, calc}

\newcommand{\red}{\textcolor{red}}

\newcommand{\DM}[1]{\red{[DM] #1}}

\newcommand{\Z}{\mathbb{Z}}

\newcommand{\C}{\mathbb{C}}

\newcommand{\CP}{\mathbb{CP}}

\usepackage[top=1.23in, bottom=1.22in, left=1.23in, right=1.23in]{geometry}

\newtheorem{thm}{Theorem}[section]
\newtheorem{cor}[thm]{Corollary}
\newtheorem{lem}[thm]{Lemma}
\newtheorem{prop}[thm]{Proposition}

\newtheorem{claim}{Claim}
\newtheorem*{case}{Case}

\theoremstyle{definition}
\newtheorem{defn}[thm]{Definition}

\newtheorem{ques}[thm]{Question}

\theoremstyle{remark}
\newtheorem{rem}[thm]{Remark}

\numberwithin{equation}{section}

\newcommand{\spin}{\ifmmode{\rm Spin}\else{${\rm spin}$\ }\fi}
\newcommand{\spinc}{\ifmmode{{\rm Spin}^c}\else{${\rm spin}^c$}\fi}

\DeclareMathOperator{\tors}{tors}

\newcommand{\wt}[1]{\widetilde{ #1 }}

\newcommand{\plumb}{\entrymodifiers={+[o][F-]} \xymatrix@C=8pt}
\newcommand{\el}{\ar@{-}[r]}
\newcommand{\ed}{\ar@{..}[r] }

\DeclareMathOperator{\supp}{Supp}

\begin{document}

\title{Smooth Realizations of Line Configurations}%

\author{Paolo Aceto}%
\address {Université de Lille}
\email{paoloaceto@gmail.com }

\author{Duncan McCoy}%
\address {Universit\'{e} du Qu\'{e}bec \`{a} Montr\'{e}al}
\email{mc\_coy.duncan@uqam.ca}

\author{JungHwan Park}%
\address {Korea Advanced Institute of Science and Technology}
\email{jungpark0817@kaist.ac.kr}
\date{\today}%

\begin{abstract}
We study the problem of realizing line configurations as collections of 2-spheres smoothly embedded in the complex projective plane. Building upon prior work by Ruberman and Starkston on topological realizations, we establish a stronger obstruction in the smooth category. Our proof relies on lattice-theoretic arguments based on Donaldson's diagonalization theorem.
\end{abstract}
\maketitle

\section{Introduction}
The study of line arrangements and configurations is a classical topic in geometry and combinatorics (see, e.g., \cite{BS:2005-1, BS:2013, BFG:2017, Ruberman-Starkston:2019, Golla:2024-1, KST:2024-1, Urzua:2022-1, Aceto-Golla:2026} for some recent developments and open problems). 
A \emph{combinatorial line arrangement} is a pair of finite sets containing lines and points and equipped with an incidence relation such that any two distinct lines are incident to a unique common point. A \emph{combinatorial $(n_k)$-configuration}, or simply an \emph{$(n_k)$-configuration}, is a combinatorial line arrangement consisting of $n$ lines and a distinguished subset $\mathcal{K}$ of $n$ points such that each point of $\mathcal{K}$ lies on exactly $k$ lines, and each line contains exactly $k$ points of $\mathcal{K}$. To avoid trivialities, we always assume $k\geq 3$.
By counting incidences on one fixed line, it is easy to see that any $(n_k)$-configuration must satisfy
\begin{equation}\label{eq:n_k_inequality}
n \geq k^2-k+1.
\end{equation}
Although an $(n_k)$-configuration need not exist for every value of $n$ satisfying this inequality, there are infinitely many examples with $n=k^2-k+1$. Such a configuration is called a \emph{finite projective plane}. For instance, if $q$ is a prime power, the projective plane $\mathbb{P}^2(\mathbb{F}_q)$ gives an example with $k=q+1$ and $n=q^2+q+1$. It is known that not all finite projective planes arise as projective spaces in this way~\cite{Batten:1997}.

A fundamental question in algebraic and incidence geometry is which
$(n_k)$-configurations can be realized by lines in $\mathbb{RP}^2$ or
$\CP^2$; see~\cite{Grunbaum:2009} for the history and state of the art of
this question, as well as related problems. In this article, we focus on $\CP^2$. We say that an $(n_k)$-configuration is \emph{geometrically $\C$-realizable} if it can be realized by an arrangement of complex projective lines in $\CP^2$ with the prescribed incidence relation.

Furthermore, one can change the category and consider realizations in $\CP^2$ by locally flat embedded 2-spheres. A \emph{topological $\C$-realization} is such a realization with the lines replaced by locally flat embedded 2-spheres in $\CP^2$ representing the line class. This problem was recently pursued by Ruberman and Starkston~\cite{Ruberman-Starkston:2019}, who used the Atiyah--Singer $G$-signature theorem~\cite{Atiyah-Singer:1968, Gordon:1986} to prove that finite projective planes of the form $\mathbb{P}^2(\mathbb{F}_q)$ are not topologically $\C$-realizable.

Moreover, they showed that there is no topological $\C$-realization of the unique $(14_4)$-configuration. This is the first case of interest beyond finite projective planes not accounted for by the geometrically realizable $(8_3)$-configuration. More recently, the first author and Golla~\cite{Aceto-Golla:2026} improved these results by proving that if an $(n_k)$-configuration is topologically $\C$-realizable, then $n\geq k^2-5$. In this article, we improve this bound in the smooth category. Following~\cite{Ruberman-Starkston:2019}, we call such realizations \emph{smooth $\C$-realizations}; see Section~\ref{sec:definitions} for precise definitions.

\begin{thm}\label{thm:main}
If there exists a smooth $\C$-realization of an $(n_k)$-configuration with $k \geq 4$, then
\[
n \geq k^2.
\]
\end{thm}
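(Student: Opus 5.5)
We argue by contradiction, using Donaldson's diagonalization theorem on a negative definite 4-manifold built from the complement of a regular neighbourhood of the realization. Suppose $S_1,\dots,S_n$ are smoothly embedded spheres in $\CP^2$ realizing an $(n_k)$-configuration with $k\ge 4$. After a small perturbation we may assume they meet transversally and positively at double points and at the multiple points. Take the $n$ points of $\mathcal{K}$, each of multiplicity exactly $k$, and blow up $\CP^2$ at all of them. The proper transforms $\widetilde S_i$ lie in $X=\CP^2\#n\overline{\CP}^2$ and have classes
\[
[\widetilde S_i]=h-\sum_{p\in\mathcal K,\ p\in S_i} e_p,
\]
so each has self-intersection $1-k$. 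Two transforms $\widetilde S_i,\widetilde S_j$ still meet once exactly when $S_i\cap S_j\notin\mathcal K$. The exceptional spheres $E_p$ (square $-1$) each meet $k$ of the $\widetilde S_i$. Let $P\subset X$ be a regular neighbourhood of $\bigcup_i\widetilde S_i\cup\bigcup_p E_p$, or of a suitably chosen subcollection, plumbed at the intersection points. Its intersection lattice $\Lambda$ embeds into $H_2(X)=\langle 1\rangle\oplus n\langle -1\rangle$.

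The complement $W=X\setminus \mathrm{int}(P)$ then has $b_2^+(W)=1-b_2^+(P)$ roughly. The first step is to arrange that $P$ carries the positive direction of $X$, so that $W$ is negative definite; failing that, one blows up further or reverses orientation. A natural positive class is built from $\sum_i[\widetilde S_i]$. This has square $n\cdot n-2kn\cdot k+kn\cdot k\cdot\ldots$, and this square must be computed carefully. Alternatively one chooses the orientation so that $P$ is negative definite and $-\partial P$ bounds $W'=\overline{X\setminus P}$ with $b_2^+=1$, and caps with $\CP^2$-type arguments.

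The cleaner route I would try is the one following Aceto--Golla. Take $Z=\overline{W}\cup P'$, where $P'$ is a negative definite plumbing with the same boundary, for example the plumbing of the $\widetilde S_i$ alone when the resulting matrix is negative definite. We then get a closed definite smooth manifold, and Donaldson forces $\Lambda\hookrightarrow\langle -1\rangle^{N}$ with a rank count. Concretely, I expect to obtain a lattice embedding of the configuration lattice, with vectors $v_i$ of square $1-k$, pairwise products $0$ or $1$ according to incidence, into a diagonal lattice of rank constrained by $n$.

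The core combinatorics is to analyse embeddings of vectors $v_i$ with $v_i^2=-(k-1)$ and $|v_i\cdot v_j|\le 1$ into $\langle -1\rangle^{N}$. Each $v_i$ has support consisting of $k-1$ unit coordinates, or fewer with larger coefficients, and the incidence pattern controls overlaps. I would show that the coefficients are all $\pm1$ for $k\ge4$, by a norm-counting argument. This turns the embedding into a set system of $n$ blocks of size $k-1$ in $N$ points with controlled pairwise intersections. Double counting, in the spirit of Fisher's inequality, then yields $n\ge k^2$; the equality $n=k^2-k+1$ range and the intermediate values must be excluded.

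The main obstacle is this lattice step: ruling out exotic embeddings with coefficients $\pm2$, and getting the sharp count. I would first try to prove that each basis vector of the diagonal lattice hits at most $k$ of the $v_i$, then sum $\sum_i v_i^2$ against the coordinate count.
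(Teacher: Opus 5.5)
\textbf{Verdict: genuine gap.} Your proposal never produces a closed definite smooth 4-manifold, so Donaldson's theorem is never applied.

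\textbf{The manifold you build is indefinite.} In the range to be excluded, $n\le k^2-1$, each proper transform $\widetilde S_i$ meets $d=n-(k^2-k+1)\le k-2$ others. So the plumbing $P$ of the $\widetilde S_i$ has matrix $(1-k)I+A$, with $A$ the adjacency matrix of a $d$-regular graph, and is negative definite of rank $n$. Its complement $W$ in $X=\mathbb{CP}^2\#n\overline{\mathbb{CP}}^2$ therefore has $b_2^+(W)=1$ and $b_2^-(W)=0$. A definite closed manifold requires gluing to $W$ a \emph{positive} definite $Q$ with $\partial Q=\partial P$. Your cap $P'$, a negative definite plumbing with the same boundary, goes the wrong way: the result has $b_2^+=1$ and $b_2^-=\mathrm{rk}\,P'$, and with $P'=P$ you simply recover $X$ (or $\overline{X}$). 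The missing construction is the paper's. Blow up positively at the $nd/2$ residual double points and at $k-d-2$ further points of each $\widetilde S_i$, so the proper transforms become pairwise disjoint $(-1)$-spheres, then blow them down. This is $W$ capped by a positive definite piece, giving a closed positive definite manifold with $b_2=N+1$, where $N=n(k-d/2-2)$. It only works because $d\le k-2$.

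\textbf{The lattice step targets the wrong vectors and has no rank to compare against.} After the blow-downs the $\widetilde S_i$ are gone. What embeds in $\mathbb{Z}^{N+1}$ are the images $\widetilde\Sigma_i$ of the positive exceptional spheres, of square $2$ or $3$, forming a ``nice'' graph. The contradiction is that any embedding needs at least $n+V=n+N$ coordinates (or $n+V_2$, or $7n/3$ in the small cases), which exceeds $N+1$. A Fisher-type count on blocks of size $k-1$ cannot give $n\ge k^2$ until the ambient rank is pinned down. Your claim that all coefficients are $\pm1$ also fails once $k-1\ge4$.

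\textbf{The borderline case needs a separate argument you do not supply.} Even with the right manifold, the graph obstruction only covers $d\le k-3$. At $d=k-2$, i.e.\ $n=k^2-1$, there are no weight-two vertices. The paper argues differently there:
\begin{enumerate}
\item The vector $u=kH-\sum_{p\in\mathcal K}E_p$ is orthogonal to every $[S''_L]$ and has $u^2=k^2-n=1$, so it splits off a $\mathbb{Z}$ summand.
\item In the complement $M\cong\mathbb{Z}^N$, choose a coordinate $e_i$ lying in the support of at most three of the square-three vectors $f_e$. Then $e_i$ has at most three nonzero $F_e$-coefficients, each $\pm1$, and its $E_p$-part is either zero or $\pm(E_p-E_q)$.
\item This contradicts the line-balance equations for $k\ge5$: the $E_p$-terms are nonzero on at least $2k-2\ge8$ lines, the $F_e$-terms on at most six.
\item For $k=4$ a separate matching lemma on $(15_4)$-configurations is needed.
\end{enumerate}
The smallest cases rely on Ruberman--Starkston and Aceto--Golla. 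None of this appears in your proposal, which only says these values ``must be excluded''.
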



We remark that one particularly interesting case ruled out by Theorem~\ref{thm:main} is that of $(15_4)$-configurations. To the best of our knowledge, it is not known whether any $(16_4)$-configuration admits a smooth $\C$-realization, whereas the unique $(17_4)$-configuration is known to admit a smooth $\C$-realization; see~\cite{BGB:2009} and~\cite[Theorem~1.3]{Ruberman-Starkston:2019}. Thus, combining the known results with Theorem~\ref{thm:main}, we obtain the following immediate corollary.

\begin{cor}\label{cor:small-parameters}
Suppose $n\leq k^2-k+3$. Then the following are equivalent:
\begin{enumerate}[label=\textup{(\arabic*)}, ref=\arabic*]
    \item\label{item:smooth-small} there exists a smooth $\C$-realization of an $(n_k)$-configuration;
\item\label{item:geometric-small} there exists a geometric $\C$-realization of an $(n_k)$-configuration;
\item\label{item:parameters-small} $k=3$ and $n\in\{8,9\}$.
\end{enumerate}
\end{cor}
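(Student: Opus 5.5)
The corollary follows by combining Theorem~\ref{thm:main} with the inequality \eqref{eq:n_k_inequality}, the known results for $k=3$, and the trivial implication from geometric to smooth realizations. The order is \eqref{item:geometric-small}$\Rightarrow$\eqref{item:smooth-small}$\Rightarrow$\eqref{item:parameters-small}$\Rightarrow$\eqref{item:geometric-small}.

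For \eqref{item:geometric-small}$\Rightarrow$\eqref{item:smooth-small}, note that complex projective lines are smoothly embedded $2$-spheres in the line class with the prescribed incidences. Hence a geometric $\C$-realization is in particular a smooth one; I will check this against the precise definitions of Section~\ref{sec:definitions}.

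For \eqref{item:smooth-small}$\Rightarrow$\eqref{item:parameters-small}, suppose a smooth $\C$-realization exists with $n\le k^2-k+3$.
\begin{itemize}
\item If $k\ge 4$, Theorem~\ref{thm:main} gives $n\ge k^2$. This forces $k^2\le k^2-k+3$, i.e.\ $k\le 3$, a contradiction. So $k=3$.
\item With $k=3$, inequality \eqref{eq:n_k_inequality} gives $7\le n\le 9$.
\item The case $n=7$ is the Fano plane $\mathbb{P}^2(\mathbb{F}_2)$, the unique $(7_3)$-configuration. It is not even topologically $\C$-realizable by Ruberman--Starkston~\cite{Ruberman-Starkston:2019}, hence not smoothly realizable.
\item Therefore $n\in\{8,9\}$.
\end{itemize}
The only point needing care here is citing uniqueness of the $(7_3)$-configuration, which is classical.

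For \eqref{item:parameters-small}$\Rightarrow$\eqref{item:geometric-small}, I will exhibit explicit complex line arrangements.
\begin{itemize}
\item \textbf{$n=9$.} Use the Hesse configuration. Its $9$ inflection points of a smooth cubic, e.g.\ $x^3+y^3+z^3=0$, together with $9$ of the $12$ lines joining them (the lines of three of the four parallel classes of $AG(2,3)$), give a $(9_3)$-configuration. Each point lies on exactly $3$ of these lines and each line contains $3$ of the points.
\item \textbf{$n=8$.} Use the M\"obius--Kantor $(8_3)$-configuration, which is realizable over $\C$ (with coordinates involving a primitive cube root of unity) but not over $\mathbb{R}$. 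It can be obtained from the Hesse configuration by deleting one point and suitably choosing lines; I would cite~\cite{Grunbaum:2009} rather than verify coordinates.
\end{itemize}
One subtlety is that the paper's notion of configuration is a full line arrangement: any two lines meet at a unique point, possibly outside $\mathcal{K}$. For genuine complex lines this is automatic.

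The main obstacle is purely bookkeeping: ensuring the definitions match, and locating correct references for uniqueness of $(7_3)$ and for complex realizability of $(8_3)$. No new mathematics is needed beyond Theorem~\ref{thm:main}.
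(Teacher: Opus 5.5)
Your proposal is correct and follows essentially the same route as the paper: the implications (2)$\Rightarrow$(1)$\Rightarrow$(3)$\Rightarrow$(2), with Theorem~\ref{thm:main} forcing $k=3$, inequality \eqref{eq:n_k_inequality} giving $7\le n\le 9$, Ruberman--Starkston excluding the Fano plane, and the M\"obius--Kantor and Pappus configurations giving complex realizations. Your Hesse arrangement with one triangle removed is combinatorially the Pappus configuration (the non-collinear pairs form three disjoint triangles), and your Hesse arrangement with the four lines through one inflection point removed is the M\"obius--Kantor configuration, so your examples coincide with the paper's cited ones.
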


\begin{proof}
Since every geometric $\C$-realization is a smooth $\C$-realization, \textup{(\ref{item:geometric-small})} implies \textup{(\ref{item:smooth-small})}. Assume \textup{(\ref{item:smooth-small})}. If $k\geq4$, then
\[
n\leq k^2-k+3<k^2,
\]
contradicting Theorem~\ref{thm:main}. Hence $k=3$. By \eqref{eq:n_k_inequality}, we have
\[
n\geq k^2-k+1=7,
\]
while the hypothesis gives $n\leq9$. The case $n=7$ is the finite projective plane of order two, which is not topologically $\C$-realizable by Ruberman--Starkston~\cite[Theorem~1.1]{Ruberman-Starkston:2019}. Therefore \textup{(\ref{item:parameters-small})} holds. Finally, assume \textup{(\ref{item:parameters-small})}. The classical
Möbius--Kantor configuration gives a geometrically $\C$-realizable
$(8_3)$-configuration, and the classical Pappus configuration gives a
geometrically $\C$-realizable $(9_3)$-configuration; see, for example,
\cite[Section~2.1]{Grunbaum:2009}. Hence
\textup{(\ref{item:geometric-small})} holds.
\end{proof}






Clearly, the bound in Theorem~\ref{thm:main} also applies to
\emph{geometric $\C$-realizations}. In the geometric setting, however,
inequalities for complex line arrangements give stronger bounds. More
precisely, any geometric $\C$-realization of an $(n_k)$-configuration with
$k\geq4$ satisfies
\[
n\geq
\begin{cases}
16, & k=4,\\[2mm]
\left\lceil \dfrac{3}{2}k^2-3k+3\right\rceil, & k\geq5.
\end{cases}
\]
Although this may be known to experts, we include the proof for completeness;
see Proposition~\ref{prop:hirzebruch}. The proof uses Hirzebruch's
inequality~\cite{Hirzebruch:1983}, together with the classical classification
of its equality case~\cite[Section~5.2]{Barthel-Hirzebruch-Hoefer:1987-weighted},
for $k=4$, and Langer's refinement~\cite{Langer:2003} for $k\geq5$. It is noteworthy that, for $k=4$, this geometric bound coincides with the smooth bound of Theorem~\ref{thm:main}.



The proof of Theorem~\ref{thm:main} is divided into two steps, both based on
lattice-theoretic arguments relying on Donaldson's diagonalization
theorem~\cite{Donaldson:1987-1}. In the first step, carried out in
Section~\ref{sec:ngeqk^2-1}, we prove that $n\geq k^2-1$. Starting from a
smooth $\C$-realization, we blow up negatively at the points of $\mathcal K$
and then perform a controlled sequence of positive blow-ups and negative
blow-downs. This produces a positive-definite 4-manifold, so Donaldson's
theorem forces its intersection lattice to be standard. The resulting
configuration of spheres determines a weighted graph, and the lattice
combinatorics of this graph obstructs the required standard embedding.

The second step, carried out in Section~\ref{sec:nneqk^2-1}, rules out the
remaining borderline case $n=k^2-1$; this is the more delicate part of the
argument. In this case, after perturbing the remaining intersections to
double points, there are
\[
N=\frac{n(k-2)}2
\]
positive blow-up centers. The final transforms of the original spheres are
pairwise disjoint $(-1)$-spheres. Blowing them down gives a
positive-definite 4-manifold whose associated lattice would have to be
standard, again by Donaldson's theorem. We then construct a unit vector that
splits off a standard summand, together with $N$ square-three vectors in its
orthogonal complement. A combinatorial argument in this standard lattice
gives equations that are incompatible with the incidence structure for
$k\geq5$. The case $k=4$ requires a slightly stronger argument, using special
combinatorics of $(15_4)$-configurations.

We end the introduction with two natural questions. While there exist line arrangements that can be realized smoothly but not geometrically (see, e.g., \cite{Ruberman-Starkston:2019}), to the best of our knowledge there is no known example of an $(n_k)$-configuration that admits a smooth $\C$-realization but not a geometric one. Moreover, in light of the well-known discrepancy between the topological and smooth categories in dimension four~\cite{Freedman:1982-1, Donaldson:1983-1}, we ask:

\begin{ques}
Does there exist an $(n_k)$-configuration that admits a smooth
$\C$-realization but not a geometric one? Does there exist one that admits a
topological $\C$-realization but not a smooth one?
\end{ques}

\subsection*{Acknowledgments} 
PA is supported by the European Union’s Horizon 2020 research and innovation programme under the Marie Sk\l{}odowska-Curie action, Grant No.\ 101030083 LDTSing.
DM is partly supported by an NSERC Discovery grant and a Canada Research Chair. 
JP is partially supported by Samsung Science and Technology Foundation (SSTF-BA2102-02) and the NRF grant RS-2025-00542968.

\section{Line arrangements and configurations}\label{sec:definitions}
First, we recall the relevant definitions (see also \cite{Ruberman-Starkston:2019}).
\begin{defn}A \emph{combinatorial line arrangement} $\mathcal{A}$ is a pair of finite sets $\mathcal{L}$ and $\mathcal{P}$, consisting of lines and points, respectively, equipped with a function $i \colon \mathcal{L} \times \mathcal{P} \rightarrow \{0,1\}$, the \emph{incidence relation}, such that for every pair of distinct lines $L,L'\in \mathcal{L}$ there exists a unique point $P\in \mathcal{P}$ such that $i(L,P)=i(L',P)=1$. 
\end{defn}

We say that a line $L$ contains a point $P$ or that $P$ lies on $L$ if $i(L,P)=1$.


\begin{defn}
A \emph{smooth $\C$-realization} of a combinatorial line arrangement $\mathcal{A} = (\mathcal{L}, \mathcal{P})$ is a collection of $|\mathcal{L}|$ smoothly embedded 2-spheres in $\CP^2$ such that:
\begin{itemize}
    \item each sphere represents the homology class of a complex projective line;
    \item all intersections are positive, transverse, and prescribed by the incidence relation of~$\mathcal{A}$.
\end{itemize}
Similarly, a \emph{topological $\C$-realization} is defined by simply replacing smooth embeddings with locally flat topological embeddings.
\end{defn}

\begin{defn} An \emph{$(n_k)$-configuration} is a combinatorial line arrangement $\mathcal{A}=(\mathcal{L},\mathcal{P})$ where $|\mathcal{L}|=n$ and $\mathcal{P}$ contains a subset $\mathcal{K}$ of $n$ points such that each point in $\mathcal{K}$ lies on exactly $k$ lines and each line contains exactly $k$ points from $\mathcal{K}$. 
\end{defn}
As mentioned in the introduction, every $(n_k)$-configuration satisfies
\begin{equation*}
n \geq  k^2 -k +1.
\end{equation*}
Given a fixed choice of line $L$ in the configuration, each of the $k$ points from $\mathcal{K}$ on $L$ provides an intersection with $k-1$ other lines in the configuration. Since $L$ intersects every other line in a unique point, this gives at least $k^2-k$ lines intersecting $L$, and hence the inequality. For future reference, we state the topological bound already mentioned in the introduction.

\begin{thm}[{\cite{Aceto-Golla:2026}}]\label{thm:RS}
If there exists a topological $\mathbb{C}$-realization of an $(n_k)$-configuration with $k \geq 4$, then
\[
n \geq k^2 - 5.
\]
In particular, the same inequality holds for any $(n_k)$-configuration admitting a smooth $\mathbb{C}$-realization.
\end{thm}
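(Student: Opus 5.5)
The bound is taken from \cite{Aceto-Golla:2026}; here is how I would reprove it. The ``in particular'' clause is immediate, since a smooth $\C$-realization is in particular a topological one: smoothly embedded spheres are locally flat. So the work is the topological statement, and Donaldson's theorem is unavailable in that category. The plan is instead to replace it by a genus bound for locally flat surfaces coming from the $G$-signature theorem, in the spirit of Ruberman--Starkston.

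\emph{Step 1: blow up and record the lattice data.} Starting from a topological $\C$-realization $L_1,\dots,L_n$, blow up $\CP^2$ negatively at the $n$ points of $\mathcal K$ to get $X=\CP^2\#n\overline{\CP^2}$, with $\sigma(X)=1-n$ and $b_2(X)=n+1$. The proper transforms $\widetilde L_i$ are locally flat spheres of class $h-\sum_{p\in\mathcal K\cap L_i}e_p$, each of square $1-k$. Two such transforms are disjoint when the lines meet at a point of $\mathcal K$, and meet once, positively, otherwise. Counting on a fixed line, it meets $k(k-1)$ others at points of $\mathcal K$, hence it meets the remaining $m:=n-1-k(k-1)$ lines elsewhere. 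Note that $n<k^2-5$ is equivalent to $m<k-6$, so the goal is to show that $m$ cannot be this small.

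\emph{Step 2: build one connected surface.} Choose a sub-collection of the $\widetilde L_i$ (possibly all of them) and resolve all their mutual intersections to obtain a single connected locally flat surface $F$. The genus $g(F)$ is determined by the incidence counts: it equals the number of resolved double points, minus the number of components, plus one. Its class is
\[
[F]=a\,h-\sum_p c_p e_p,
\]
where $a$ is the number of chosen lines and $c_p$ is the number of chosen lines through $p$. When all lines are used, $[F]=nh-k\sum_p e_p$, which gives $[F]^2=n^2-k^2n$ and
\[
g=\tfrac{nm}{2}-n+1.
\]

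\emph{Step 3: apply a Rokhlin/Hsiang--Szczarba type bound and compare.} If $[F]=d\,x$ with a prime power $q=p^r$ dividing $d$, the $G$-signature theorem for the $q$-fold cyclic branched cover of $X$ along $F$ gives
\[
2g(F)\;\ge\;\Bigl|\tfrac{q^2-1}{2q^2}\,[F]^2-\sigma(X)\Bigr|-b_2(X).
\]
Substituting $\sigma(X)=1-n$ and $b_2(X)=n+1$ produces an inequality between a term of size roughly $n\,m$ on the left and a term of size roughly $\tfrac{q^2-1}{2q^2}\,n\,(k^2-n)$ on the right. For $n$ close to $k^2-k+1$ the right side should dominate, and the goal is to push the contradiction up to every $n<k^2-5$.

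\emph{Main obstacle: divisibility.} The class $[F]$ must be divisible by a suitable prime power, and this is where I expect the real work. Using all lines requires $q\mid\gcd(n,k)$, which generally fails. My first attempt would be to choose sub-collections adapted to a prime $q$: for example, the lines through a fixed point of $\mathcal K$, or the complement of one line, together with a suitable handling of the exceptional classes. Possibly I would also add copies of exceptional spheres, or tube in extra lines, to adjust the coefficients $c_p$ modulo $q$ at a controlled cost in genus. The goal is to use $q=2$ or $q=3$ and still obtain a bound that is linear in $m$ against quadratic in $k$. Tuning this choice so that the constant comes out exactly $-5$ is the delicate part; if it fails, I would fall back on the lattice-theoretic Ruberman--Starkston argument applied to the negative-definite plumbing obtained from the $\widetilde L_i$.
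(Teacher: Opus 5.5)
The paper does not prove this statement; it is quoted from Aceto--Golla. The only deduction the paper makes is the ``in particular'' clause, and you handle that correctly: a smoothly embedded sphere is locally flat, so a smooth $\C$-realization is a topological one. Your sketched reproof of the main inequality, however, has a genuine gap. It is more serious than the divisibility problem you flag: even if divisibility were granted, the comparison in Step 3 is too weak at leading order in $k$.

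To see this, use $k^2-n=k-1-m$. Your all-lines surface then has $[F]^2=-n(k-1-m)$ and $2g=nm-2n+2$. Write $c=\frac{q^2-1}{2q^2}\in[\frac38,\frac12)$ and take the branch where the absolute value equals $cn(k-1-m)-n+1$. The Hsiang--Szczarba inequality becomes
\[
(1+c)\,m+\frac{2}{n}\;\ge\;c\,(k-1).
\]
So the most this can ever exclude is $m<\frac{c}{1+c}(k-1)<\frac{k-1}{3}$. That gives at best a bound of roughly $n\ge k^2-\frac23(k-1)$, whose deficit grows linearly in $k$. For $m=k-7$, that is $n=k^2-6$, the displayed inequality reads $k-7-6c+\frac2n\ge 0$, which holds for every prime power $q$ once $k\ge 10$. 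For example, $(n,k)=(94,10)$ has $\gcd(n,k)=2$, so $q=2$ is legitimately available, yet the inequality reads $96\ge 23.5$ and gives no contradiction.

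So the issue is not ``tuning the constant to $-5$''. The factor $\frac{q^2-1}{2q^2}<\frac12$ in front of $|[F]^2|$ cannot beat a genus that grows like $\frac12 nm$. The remedies you list (sub-collections, tubing in exceptional spheres or extra lines) are not worked out. Nothing in the proposal shows how they would recover the lost factor while also solving divisibility. The lattice fallback is only named, and the lattice arguments in this paper rest on Donaldson's theorem, which, as you note yourself, is unavailable for topological manifolds.

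As it stands, your proposal establishes only the ``in particular'' clause. For the main bound you must either cite Aceto--Golla, as the paper does, or bring in an input substantially sharper than the genus bound applied to this surface.
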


\section{Lattices}
 Throughout, we consider $\mathbb{Z}^N$ equipped with the standard positive-definite inner product, and we fix an orthonormal basis $\{e_1, \dots, e_N\}$ for $\mathbb{Z}^N$. Given this choice of basis, if $v \in \mathbb{Z}^N$ is a vector, we define the \emph{support} of $v$ to be
\[
\supp(v) := \{1 \leq i \leq N \mid e_i \cdot v \neq 0\}.
\]
Similarly, if $S \subset \mathbb{Z}^N$ is a set of vectors, we define the \emph{support} of $S$ to be
\[
\supp(S) := \bigcup_{v \in S} \supp(v).
\]

 \begin{defn}
Let $\Gamma$ be a finite simple weighted graph. An \emph{embedding} of $\Gamma$ into $\mathbb{Z}^N$ is a map
\[
\varphi \colon V(\Gamma) \rightarrow \mathbb{Z}^N
\]
such that
\[
|\varphi(u) \cdot \varphi(v)| =
\begin{cases}
    w(u) & \text{if } u = v, \\
    1   & \text{if } u \neq v \text{ are adjacent}, \\
    0    & \text{otherwise},
\end{cases}
\]
for all vertices $u, v \in V(\Gamma)$.
\end{defn}

Given a choice of embedding $\varphi$, we will simplify notation by identifying each vertex with its image in $\mathbb{Z}^N$. Furthermore, we will abuse notation by treating any graph $G$ interchangeably with its vertex set $V(G)$ when no confusion arises.
We focus on graphs $\Gamma$ satisfying the following somewhat arcane collection of hypotheses, the motivation for which will become clear later in the proof.

\begin{defn}\label{def:nice_graph}
Let $\Gamma$ be a simple weighted graph. We say that $\Gamma$ is \emph{nice} if every vertex has weight two or three and $\Gamma$ can be written as a union
\[
\Gamma = \bigcup_{\alpha=1}^n G_\alpha
\]
of subgraphs satisfying the following conditions:
\begin{enumerate}[(I)]
    \item\label{cond:Gi_complete} Two vertices $u$ and $v$ are adjacent if and only if there is some $G_\alpha$ containing both $u$ and $v$;
    \item\label{cond:linking_distinct_components} For all $\alpha \neq \beta$, either $G_\alpha$ and $G_\beta$ are disjoint, or $G_\alpha \cap G_\beta$ consists of a unique vertex which has weight three;
    \item\label{cond:weight_three_vertices} Every vertex of weight three is contained in precisely two of the $G_\alpha$.
\end{enumerate}
\end{defn}

\begin{rem}
Condition~\ref{cond:Gi_complete} implies that each $G_\alpha$ is a complete graph, and Condition~\ref{cond:linking_distinct_components} implies that every edge of $\Gamma$ is contained in precisely one $G_\alpha$.
\end{rem}

\begin{lem}\label{lem:vertex_pairs}
Let $\Gamma = \bigcup_{\alpha=1}^n G_\alpha$ be a nice graph such that each $G_\alpha$ contains at least three vertices, including at least one vertex of weight two and at least one vertex of weight three. Then, for any embedding of $\Gamma$ into $\mathbb{Z}^N$, and for any pair of distinct vertices $u$ and $v$, we have
\[
|\supp(u) \cap \supp(v)| = |u \cdot v| =
\begin{cases}
    1 & \text{if $u$ and $v$ are adjacent}, \\
    0 & \text{otherwise}.
\end{cases}
\]
\end{lem}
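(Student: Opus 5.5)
The plan is to reduce the statement to ruling out \emph{cancellation}, and then to rule out each possible cancellation pattern using the clique structure of a nice graph. Every vertex has weight two or three, so after fixing the orthonormal basis each vertex has one of two forms. A weight-two vertex is $\pm e_a \pm e_b$ with $a \neq b$. A weight-three vertex is $\pm e_a \pm e_b \pm e_c$ with distinct indices. All nonzero coordinates are therefore $\pm 1$, so for distinct vertices $u,v$ we have $|u\cdot v| \le |\supp(u)\cap\supp(v)|$. Equality holds precisely when the common coordinates contribute with the same sign, i.e.\ there is no cancellation. Since $|u\cdot v|$ is $1$ or $0$ according to adjacency, it suffices to exclude the configurations where the overlap strictly exceeds $|u\cdot v|$. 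Note also that $|u\cdot v|\le 1$ for distinct vertices, and in particular $u \neq \pm v$.

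Up to signs and relabelling of coordinates, the bad configurations are the following. Both vertices of weight two: $u=e_1+e_2$ and $v=e_1-e_2$. Mixed weights: $u=e_1+e_2$ and $v=e_1-e_2+e_3$. Both of weight three: $u\cdot v=0$ with overlap two, or $u\cdot v=\pm1$ with overlap three, e.g.\ $e_1+e_2+e_3$ and $e_1+e_2-e_3$. Overlap three with $u\cdot v=\pm 3$ is excluded because $u\neq\pm v$. In each case I will extract a parity statement: for a suitable vertex $w$ (typically one whose support lies in, or meets, the overlap in a controlled way), $w\cdot u \equiv w\cdot v \pmod 2$. Consequently, if $w$ is adjacent to $u$ then $w\cdot v$ is odd, hence nonzero, so $w$ is also adjacent to $v$.

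The model argument is the weight-two case; I would carry it out first and then adapt it. There $u-v = 2e_2$, so the parity statement holds for \emph{every} $w$.
\begin{enumerate}
\item The vertices $u,v$ are not adjacent, since $u\cdot v=0$. Let $G_\alpha$ be the (unique, by Condition~\ref{cond:linking_distinct_components}) clique containing $u$.
\item By hypothesis $G_\alpha$ contains a weight-three vertex $x$ and a third vertex $y\neq u,x$. Both are adjacent to $u$, hence by parity both are adjacent to $v$, and $v\notin G_\alpha$.
\item If $y$ has weight two, it lies only in $G_\alpha$, so Condition~\ref{cond:Gi_complete} forces $v\in G_\alpha$. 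This is a contradiction.
\item If $y$ has weight three, then $x$ and $y$ meet $v$ in cliques $G_\beta$ and $G_\gamma$ other than $G_\alpha$. If $\beta=\gamma$, then $G_\alpha\cap G_\beta\supseteq\{x,y\}$, contradicting Condition~\ref{cond:linking_distinct_components}. If $\beta\neq\gamma$, then $v\in G_\beta\cap G_\gamma$, so $v$ has weight three, again a contradiction.
\end{enumerate}

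For the remaining cases the parity trick is only partial, since $u-v$ is no longer divisible by $2$. For example, with $u=e_1+e_2$ and $v=e_1-e_2+e_3$ the relation holds modulo the $e_3$-coordinate of $w$. I would therefore combine the same clique-counting with extra information. One source is the neighbours in $G_\alpha$ of weight two, which lie in only one clique. Another is that two weight-three vertices share at most one clique. The standing hypotheses (each $G_\alpha$ has $\ge 3$ vertices, at least one of weight two and one of weight three) are used exactly here, to guarantee enough neighbours to force a vertex into too many cliques. Another useful tool is that a weight-two vertex $z$ in $G_\alpha$ has support of size two. Pairwise products with the other clique members being $\pm1$ then pin down coordinates: if $z$ and $w$ are adjacent weight-two vertices they share exactly one coordinate, so the weight-two members of one clique form a ``star'' or ``triangle'' pattern.

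The main obstacle is the weight-three/weight-three case with overlap three and $u\cdot v=\pm1$. There $u$ and $v$ are adjacent, so both lie in a common clique $G_\alpha$, and nonadjacency cannot be exploited. For this case I would first use a weight-two vertex $z\in G_\alpha$. It satisfies $z\cdot u=\pm1$ and $z\cdot v=\pm1$ with $|\supp(z)|=2$, and a short coordinate check should show this is impossible when $u,v$ agree on two coordinates and disagree on the third. If that check does not close the case, I would fall back on the third vertex of $G_\alpha$ together with the unique second clique through $u$ (Condition~\ref{cond:weight_three_vertices}) and repeat the parity/clique-counting argument.
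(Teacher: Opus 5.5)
\textbf{There is a genuine gap.} Your argument for two non-adjacent weight-two vertices is correct and is essentially the paper's. The remaining cases, however, are not handled, and your primary plan for the case you call the main obstacle fails.

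\emph{Adjacent weight-three vertices with overlap three.} Take $u=e_1+e_2+e_3$, $v=e_1+e_2-e_3$ and $z=e_1+e_4$. Then $z\cdot u=z\cdot v=1$, so a weight-two vertex $z$ of the common clique $G_\alpha$ gives no contradiction. The reason is that overlap three forces $u\equiv v\pmod 2$. Every vertex therefore pairs with $u$ and $v$ with the same parity, and a vertex adjacent to both carries no information; you need one adjacent to exactly one of them. It comes from the second clique through $u$:
\begin{itemize}
\item by Condition~\ref{cond:weight_three_vertices}, $u\in G_\beta$ for some $\beta\neq\alpha$;
\item by Condition~\ref{cond:linking_distinct_components}, $v\notin G_\beta$;
\item so a weight-two vertex of $G_\beta$ is adjacent to $u$ but not to $v$.
\end{itemize}
This case is just your weight-two argument repeated, not the main obstacle. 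Your fallback points this way but is never carried out.

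\emph{Non-adjacent pairs with $u\not\equiv v\pmod 2$.} These are the real difficulty: weight two against weight three, and two weight-three vertices, each with overlap two. For them you only list tools. Your clique-counting from the weight-two case relied on the full congruence $w\cdot u\equiv w\cdot v$ for every $w$, and witnesses from $u$'s own clique do not suffice. For instance, suppose $u$'s clique is $\{u,x,y\}$ with $x,y$ of weight three, each also lying in one of the two cliques through $v$. Then $u=e_1+e_2$, $v=e_1-e_2+e_3$, $x=e_1+e_5+e_6$, $y=e_2+e_5+e_7$ realize every prescribed product among these four vertices.

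The missing idea is to bootstrap from the settled cases, using witnesses in the cliques of the weight-three vertex.
\begin{itemize}
\item Let $v$ have weight three and not be adjacent to $u$. Choose weight-two vertices $z_1,z_2$ in the two cliques through $v$. Neither clique contains $u$, so $z_1,z_2$ are non-adjacent to $u$ and to each other.
\item By the weight-two case, $\supp(z_1)$ and $\supp(z_2)$ are disjoint. Since $z_i\cdot v\neq 0$, two of the three coordinates of $\supp(v)$ lie in $\supp(z_1)\cup\supp(z_2)$.
\item If $u$ has weight two, the weight-two case shows $\supp(u)$ misses $\supp(z_1)\cup\supp(z_2)$. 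If $u$ has weight three, the mixed case just proved shows the same.
\item Either way $|\supp(u)\cap\supp(v)|\le 1$, and parity with $u\cdot v=0$ forces it to be $0$.
\end{itemize}
This ordering (adjacent and weight-two pairs first, then mixed, then two weight-three vertices) is how the paper's proof is organized.
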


\begin{proof}
    Since all vertices have weight two or three, we necessarily have $|v \cdot e_j| \leq 1$ for all vertices $v$ and all basis vectors $e_j$. This implies that $v \cdot v = |\supp(v)|$ for every vertex $v$, and that for any pair of vertices $u$ and $v$ we have
\begin{equation}\label{eq:parity}
    u \cdot v \equiv |\supp(u) \cap \supp(v)| \pmod{2}
\end{equation}
and
\begin{equation}\label{eq:max_size}
    |u \cdot v| \leq |\supp(u) \cap \supp(v)| \leq \min\{|\supp(u)|, |\supp(v)|\}.
\end{equation}

    \begin{case}
        If $u,v$ are adjacent, then \[|\supp(u)\cap \supp(v)|=1.\]
    \end{case}
    \begin{proof}[Proof of Case]
        Together \eqref{eq:parity} and \eqref{eq:max_size} imply that if $u$ and $v$ are adjacent and at least one of $u$ or $v$ has weight two, then $|\supp(u)\cap \supp(v)|=1$. Thus it suffices to consider the case that both $u$ and $v$ have weight three. Using \eqref{eq:parity} and \eqref{eq:max_size}, we see that it suffices to rule out the possibility that $|\supp(u)\cap \supp(v)|=3$. However, this would mean that $\supp(u)=\supp(v)$ and, consequently, that $u\equiv v \pmod{2}$. Since $u$ and $v$ are adjacent, Condition~\ref{cond:Gi_complete} implies that there exists a subgraph $G_\alpha$ containing both $u$ and $v$. Condition~\ref{cond:weight_three_vertices} implies that there is another index $\beta \neq \alpha$ such that $u \in G_\alpha \cap G_\beta$. However, by Condition~\ref{cond:linking_distinct_components}, we must have $v \not\in G_\beta$. Thus, we may choose a vertex $z$ of weight two in $G_\beta$; such a vertex is adjacent to $u$ but not to $v$. Since $z \cdot u \not\equiv z \cdot v \pmod{2}$, this implies $u \not\equiv v \pmod{2}$, completing the argument.
    \end{proof}

    \begin{case}
        If $u,v$ are not adjacent and both have weight two, then \[|\supp(u)\cap \supp(v)|=0.\]
    \end{case}
    \begin{proof}[Proof of Case]
    Using \eqref{eq:parity} and \eqref{eq:max_size}, we see that it suffices to rule out the possibility that $|\supp(u) \cap \supp(v)| = 2$. However, this would imply that $\supp(u) = \supp(v)$ and, consequently, that $u \equiv v \pmod{2}$. To rule out this possibility, it suffices to exhibit a vertex $z$ that is adjacent to exactly one of $u$ or $v$, since this would yield $z \cdot u \not\equiv z \cdot v \pmod{2}$. We now construct such a vertex $z$ as follows. Since $u$ and $v$ are not adjacent, Condition~\ref{cond:Gi_complete} implies that there exists a subgraph $G_\alpha$ containing $u$ but not $v$. If $G_\alpha$ contains two weight-two vertices, then by Condition~\ref{cond:linking_distinct_components}, we may take $z$ to be another such vertex in $G_\alpha$. If $u$ is the unique vertex of weight two in $G_\alpha$, then the hypotheses of the lemma imply that $G_\alpha$ contains at least two vertices of weight three, say $v_1$ and $v_2$. However, again using Conditions~\ref{cond:Gi_complete} and~\ref{cond:linking_distinct_components}, we see that at most one of $v_1$ and $v_2$ can be adjacent to $v$. Thus, we take $z$ to be one of these, either $v_1$ or $v_2$.
    \end{proof}
    \begin{case}
        If $u,v$ are not adjacent and $u$ has weight two and $v$ has weight three, then 
        \[|\supp(u)\cap \supp(v)|=0.\]
    \end{case}
    \begin{proof}[Proof of Case]
    
    Since $u$ and $v$ are not adjacent, it follows that they do not lie in the same $G_\alpha$. By Conditions~\ref{cond:Gi_complete} and~\ref{cond:weight_three_vertices} there are distinct indices $\alpha,\beta$ such that $u\notin G_\alpha \cup G_\beta$ and $v\in G_\alpha \cap G_\beta$. By choosing a vertex of weight two from each of $G_\alpha, G_\beta$, we see that $u$ and $v$ are contained in an induced subgraph of the form

         \[\begin{tikzcd}[row sep=0.1em]
	2& & 2 & 3 & 2 \\
	\bullet & &\bullet &  \bullet & \bullet\\
        u& & & v&
        \arrow[thick, no head, from=2-4, to=2-5]
	\arrow[thick, no head, from=2-3, to=2-4]
\end{tikzcd}\]
Using the first two cases of the lemma, notably, the fact that all vertices of weight two must have disjoint supports, we see that $|\supp(u) \cap \supp(v)| \leq 1$. This, combined with the parity condition~\eqref{eq:parity}, completes the proof of the case.\end{proof}

    \begin{case}
        If $u,v$ are not adjacent and $u$ and $v$ both have weight three, then
        \[|\supp(u)\cap \supp(v)|=0.\]
    \end{case}
    \begin{proof}[Proof of Case]
    By Conditions~\ref{cond:Gi_complete} and~\ref{cond:weight_three_vertices}, we see that there are distinct indices $\alpha, \beta, \gamma, \delta$ such that
\[
u\in G_\alpha \cap G_\beta \qquad \text{ and }\qquad v\in G_\gamma \cap G_\delta.
\]
By choosing a vertex of weight two from each of $G_\alpha, G_\beta, G_\gamma$, and $G_\delta$ we see that $u$ and $v$ are contained in an induced subgraph of the form
\[\begin{tikzcd}[row sep=0.1em]
	2&3 & 2 & &2& 3 & 2 \\
	\bullet &\bullet &\bullet & &\bullet &  \bullet & \bullet\\
        &u && & & v&
        \arrow[thick, no head, from=2-1, to=2-2]
	\arrow[thick, no head, from=2-2, to=2-3]
    \arrow[thick, no head, from=2-5, to=2-6]
	\arrow[thick, no head, from=2-7, to=2-6]
\end{tikzcd}\]
Using the first two cases of the lemma, we see that $|\supp(u) \cap \supp(v)| \leq 1$.
Again, together with the parity condition~\eqref{eq:parity}, this completes the proof of the case.
\end{proof}
Combining all the cases concludes the proof.
\end{proof}


\begin{lem}\label{lem:2-graph_clique}
Let $\Gamma=\cup_{\alpha=1}^n G_\alpha$ be a nice graph such that each $G_\alpha$ contains at least one vertex of weight three. Then, for any embedding of $\Gamma$ into $\Z^N$ and any $G_\alpha$, we have
\[
\left|\bigcap_{j=1}^k \supp (u_j)\right|=1,
\]
where $u_1, \dots, u_k$ are $k\geq 2$ distinct weight two vertices in $G_\alpha$.
\end{lem}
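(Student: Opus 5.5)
The plan is to work directly with supports, using only the parity and size constraints that come from the weights. As in the proof of Lemma~\ref{lem:vertex_pairs}, every vertex has weight two or three, so each vertex $v$ satisfies $|v\cdot e_j|\leq 1$ for all $j$ and $v\cdot v=|\supp(v)|$. In particular, each weight two vertex $u_j$ has the form $\pm e_a\pm e_b$, with support of size two. I cannot quote Lemma~\ref{lem:vertex_pairs}, since its hypothesis that each $G_\alpha$ has at least three vertices is not assumed here, so I will redo the one relevant pairwise estimate.

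\emph{Step 1: pairwise intersections.} By Condition~\ref{cond:Gi_complete}, $G_\alpha$ is complete, so $u_i$ and $u_j$ are adjacent for $i\neq j$ and $|u_i\cdot u_j|=1$. The parity relation $u\cdot v\equiv|\supp(u)\cap\supp(v)| \pmod 2$ and the bound $|\supp(u)\cap\supp(v)|\leq 2$ then force $|\supp(u_i)\cap\supp(u_j)|=1$. This already settles the case $k=2$, and it gives the upper bound $|\bigcap_j\supp(u_j)|\leq 1$ for every $k$.

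\emph{Step 2: excluding a triangle.} Consider three of the vectors, say $u_1,u_2,u_3$, each of support size two and pairwise sharing exactly one coordinate. There are only two possibilities. Either all three share a common coordinate, or their supports are $\{a,b\}$, $\{b,c\}$, $\{c,a\}$ for distinct indices $a,b,c$. I would rule out the second, triangle-shaped configuration using the hypothesis that $G_\alpha$ contains a vertex $x$ of weight three. Since $G_\alpha$ is complete, $x$ is adjacent to each $u_i$, so each $x\cdot u_i$ is odd. By parity, each of $|\supp(x)\cap\{a,b\}|$, $|\supp(x)\cap\{b,c\}|$ and $|\supp(x)\cap\{c,a\}|$ is odd. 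The sum of these three counts is then odd. On the other hand, the sum equals $2|\supp(x)\cap\{a,b,c\}|$, which is even. This contradiction shows that any three of the $u_j$ have a common coordinate.

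\emph{Step 3: assembling.} For $k\geq 3$, let $\{a\}=\supp(u_1)\cap\supp(u_2)$. Take any $j\geq 3$. By Step 2, the vectors $u_1,u_2,u_j$ share a common coordinate, and that coordinate must be $a$, because $a$ is the only coordinate common to $u_1$ and $u_2$. Hence $a$ lies in every $\supp(u_j)$, so the intersection is nonempty. Combined with the upper bound from Step 1, the intersection has size exactly one.

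The only nontrivial point is Step 2, the parity argument excluding the triangle. This is exactly where the hypothesis that $G_\alpha$ contains a weight three vertex is needed, since the triangle configuration is otherwise consistent with all the pairwise conditions.
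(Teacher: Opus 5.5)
Your proof is correct and follows essentially the same route as the paper: the pairwise parity bound, then ruling out the ``triangle'' configuration of three weight-two supports by the parity of inner products with a weight-three vertex of $G_\alpha$ (the paper phrases this as $u_1+u_2+u_3\equiv 0 \pmod 2$), then extending to all $k$, where anchoring at $\supp(u_1)\cap\supp(u_2)$ replaces the paper's induction. Re-deriving the pairwise estimate instead of citing Lemma~\ref{lem:vertex_pairs} is a sensible refinement. Only that lemma's adjacent case with a weight-two vertex is needed, and it uses parity alone, whereas the lemma's full hypotheses are not assumed here.
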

\begin{proof}
By Lemma~\ref{lem:vertex_pairs}, any pair of distinct $i$ and $j$ satisfy $|\supp(u_i) \cap \supp(u_j)| = 1$. Thus we have already established the lemma for $k=2$.
For $k=3$, we proceed by contradiction. Suppose that \[\supp(u_1)\cap \supp(u_2) \cap \supp(u_3)=\emptyset.\]
This implies that
\[
\supp(u_1) \cap \supp(u_2),\qquad \supp(u_1) \cap \supp(u_3), \qquad \text{ and } \qquad \supp(u_2) \cap \supp(u_3)
\]
are all distinct singletons. By the inclusion-exclusion principle, we have
\[
|\supp(u_1) \cup \supp(u_2) \cup \supp(u_3)| = 3.
\]
It follows that every basis vector
\[
e_i \in \supp(u_1) \cup \supp(u_2) \cup \supp(u_3)
\]
is contained in precisely two of $\supp(u_1)$, $\supp(u_2)$, and $\supp(u_3)$. In particular, $u_1+u_2+u_3 \equiv 0 \pmod 2 $. This is absurd, since the hypotheses of the lemma guarantee the existence of a vertex (of weight three) that is adjacent to all three of $u_1, u_2, u_3$. Thus, the conclusion holds for $k = 3$ as well.

 For $k\geq 4$, we proceed inductively. By the inductive assumption and since $k - 1 \geq 3$, we have
\[
\supp(u_1) \cap \dots \cap \supp(u_{k-1}) = \supp(u_2) \cap \supp(u_3)
\]
and
\[
\supp(u_2) \cap \supp(u_3) = \supp(u_2) \cap \dots \cap \supp(u_k).
\]
 Taking the intersection of the left-hand side of the first equality and the right-hand side of the second equality yields
\[
\left|\bigcap_{j=1}^k \supp(u_j)\right| = |\supp(u_2) \cap \supp(u_3)| = 1,
\]
which concludes the proof.
 \end{proof}

\begin{lem}\label{lem:general_clique}
Let $\Gamma = \bigcup_{\alpha=1}^n G_\alpha$ be a nice graph such that each $G_\alpha$ contains at least four vertices, including at least one vertex of weight two and at least two vertices of weight three. Then, for any embedding of $\Gamma$ into $\Z^N$ and any $G_\alpha$, we have
 \[
 \left|\bigcap_{v\in G_\alpha} \supp (v)\right|=1.
 \]
\end{lem}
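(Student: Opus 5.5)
Fix $G_\alpha$ and an embedding. Since $G_\alpha$ has at least four vertices, Lemma~\ref{lem:vertex_pairs} applies, so any two distinct vertices $u,v\in G_\alpha$ (which are adjacent by Condition~\ref{cond:Gi_complete}) satisfy $|\supp(u)\cap\supp(v)|=1$, and every vertex has $|\supp(v)|=w(v)\in\{2,3\}$. The aim is to show that all these pairwise singletons coincide. The plan is to first prove that any three vertices of $G_\alpha$ share a common support index. Then the inductive step from the proof of Lemma~\ref{lem:2-graph_clique} transfers verbatim: the common intersection of any three vertices equals each pairwise intersection, so all pairwise intersections are the same singleton.

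\textbf{Triples.} Let $u_1,u_2,u_3\in G_\alpha$ be distinct and suppose $\supp(u_1)\cap\supp(u_2)\cap\supp(u_3)=\emptyset$. Then the three pairwise intersections are distinct singletons. Write $a_{12},a_{13},a_{23}$ for their indices. I will split into cases according to how many of the $u_i$ have weight three.

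If all three have weight two, the argument of Lemma~\ref{lem:2-graph_clique} applies directly. We get $u_1+u_2+u_3\equiv 0\pmod 2$. But $G_\alpha$ contains a weight-three vertex $z$ distinct from them, which is adjacent to all three, so $z\cdot(u_1+u_2+u_3)$ is odd, a contradiction.

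If some $u_i$ have weight three, I will look for a vertex $z\in G_\alpha\setminus\{u_1,u_2,u_3\}$ and use the constraint that $\supp(z)$ meets each $\supp(u_i)$ in exactly one index. Counting is the key point. Each $u_i$ of weight two has support exactly $\{a_{ij},a_{ik}\}$. Each $u_i$ of weight three has one extra private index $b_i$. The vector $z$ must hit each $\supp(u_i)$ exactly once using at most three indices.
\begin{itemize}
\item If $z$ uses some $a_{ij}$, it meets both $u_i$ and $u_j$ there. It must then meet $u_k$ in exactly one further index, either $b_k$ or another $a$-index; the latter is impossible, since it would double-hit $u_i$ or $u_j$.
\item Otherwise $z$ uses only private indices $b_i$. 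This requires all three $u_i$ to have weight three and $z=\pm e_{b_1}\pm e_{b_2}\pm e_{b_3}$.
\end{itemize}
So in every case $\supp(z)$ meets $\supp(u_1)\cup\supp(u_2)\cup\supp(u_3)$ in a rigid way.

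\textbf{Main obstacle.} I expect the hardest step to be turning this rigidity into a contradiction. The tools are the other vertices of $G_\alpha$ together with the vertices outside $G_\alpha$ given by Conditions~\ref{cond:linking_distinct_components} and~\ref{cond:weight_three_vertices}.

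The first thing to try is parity, using the hypothesis that $G_\alpha$ has at least four vertices with at least one of weight two and two of weight three. Choose $z$ to be a weight-two vertex of $G_\alpha$ when one is available outside the triple. Such a $z$ cannot be of the form $\pm e_{b_1}\pm e_{b_2}\pm e_{b_3}$, so $z=\pm e_{a_{ij}}\pm e_{b_k}$. Then $\supp(z)\cap\supp(u_i)\cap\supp(u_j)\neq\emptyset$, and the triple $u_i,u_j,z$ does share an index. Applying the same analysis to the triples involving $z$ should force $u_k$ to share $a_{ij}$, contradicting our assumption.

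If the triple already contains all the weight-two vertices, take $z$ of weight three instead. The remaining case $z=\pm e_{b_1}\pm e_{b_2}\pm e_{b_3}$ I would rule out using the weight-two vertex of the other subgraph $G_\beta$ containing $u_1$. By Lemma~\ref{lem:vertex_pairs}, this vertex meets $u_1$ in exactly one index and is disjoint from $u_2$, $u_3$ and $z$. So its intersection with $\supp(u_1)$ avoids $a_{12}$, $a_{13}$ and $b_1$, which leaves no index available and gives a contradiction. The same trick of using outside weight-two neighbours is what I would try first whenever the case analysis stalls.
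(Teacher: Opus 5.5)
Your reduction to triples is sound, and so are the all-weight-two case and the rigidity bullets. The bullets show that any fourth vertex $z\in G_\alpha$ must use some private index $b_k$ of a weight-three member of the triple. The gap is the step you flag as the main obstacle: it is never closed.

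\textbf{First branch.} You claim that triples involving a weight-two $z$ \emph{should force} $u_k$ to contain $a_{ij}$. No argument using only vertices of $G_\alpha$ can prove this. Take $u_1=e_1+e_2$ and $u_2=e_2+e_3$ of weight two, and $v_1=e_1+e_3+e_4$ and $v_2=e_2+e_4+e_5$ of weight three. All six pairwise products are $1$ and all pairwise supports meet in one index. Yet $\supp(u_1)\cap\supp(v_1)\cap\supp(v_2)=\emptyset$, and $z=u_2=e_{a_{u_1v_2}}+e_{b_{v_1}}$ behaves exactly as your rigidity analysis predicts, while $v_1$ does not contain $e_2$. This is precisely the shape of $G_\alpha$ when $k=6$, $d=2$ in Theorem~\ref{thm:k2minus1-bound}.

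\textbf{Second branch.} The case you treat, $z=\pm e_{b_1}\pm e_{b_2}\pm e_{b_3}$, cannot occur there. It forces all three $u_i$ to have weight three, so the weight-two vertex of $G_\alpha$ lies outside the triple and you are back in the first branch. The case that actually occurs is left open: $z$ uses some $a_{ij}$ together with $b_k$, for example the triple $u_1,u_2,v_1$ with $z=v_2$ above.

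\textbf{Fix.} Apply your outside-neighbour trick to every private index, not only in the all-$b$ case. Suppose $u_k$ in the triple has weight three, and let $w$ be a weight-two vertex of the other subgraph $G_\beta\ni u_k$. Since $G_\alpha\cap G_\beta=\{u_k\}$, $w$ is adjacent to neither $u_i$ nor $u_j$, nor to any $z\in G_\alpha\smallsetminus\{u_k\}$. Lemma~\ref{lem:vertex_pairs} then makes $\supp(w)$ disjoint from all of their supports. But $\supp(w)$ meets $\supp(u_k)=\{a_{ik},a_{jk},b_k\}$ in exactly one index, which must therefore be $b_k$. Hence $b_k\notin\supp(z)$. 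Combined with your bullets, this gives the contradiction uniformly, and your triple-plus-induction argument then goes through.

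Once repaired, your route differs from the paper's. The paper splits on whether $G_\alpha$ has at least two weight-two vertices or exactly one. In the first case it uses Lemma~\ref{lem:2-graph_clique} and then attaches each weight-three vertex via its outside neighbour. In the second it uses pigeonhole on $\supp(u_1)$ and then attaches the remaining weight-three vertices. In both cases the local verifications are left to the reader. Your repaired argument avoids that case split and makes the role of the outside neighbours explicit.
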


    \begin{proof}
        Write the vertices of $G_\alpha$ as $u_1, \dots, u_k, v_1, \dots, v_\ell$, where $u_1, \dots, u_k$ all have weight two and $v_1, \dots, v_\ell$ have weight three. By hypothesis, we have $k \geq 1$, $\ell \geq 2$, and $k + \ell \geq 4$. We split the analysis into two cases.

    \begin{case}
         $k\geq 2$.
    \end{case}
    \begin{proof}[Proof of Case]
We assume $k \geq 2$ and prove the desired statement. By
Lemma~\ref{lem:2-graph_clique}, we have
\[
\left|\bigcap_{i=1}^k \supp(u_i)\right|
=
|\supp(u_1)\cap\supp(u_2)|
=
1.
\]
Thus, it is enough to show that, for every $v_j$,
\[
|\supp(u_1)\cap\supp(u_2)\cap\supp(v_j)|=1.
\]

Consider $v_j$, an arbitrary vertex of weight three. Let $v_{j'} \neq v_j$ be another vertex of weight three in $G_\alpha$. Then, by Conditions~\ref{cond:linking_distinct_components} and~\ref{cond:weight_three_vertices}, there exists $\beta \neq \alpha$ such that $v_j \in G_\beta$ but $v_{j'} \notin G_\beta$. By taking $u_1, u_2, v_j, v_{j'}$, and a vertex of weight two in $G_\beta$, we obtain an induced subgraph of the form:

\[\begin{tikzpicture}[xscale=1,yscale=1,baseline={(0,0)}]

\node at (-2, .4) {$2$};
\node at (0, .4) {$3$};
\node at (3, 1.73+.4) {$2$};
\node at (3, -1.73-.4) {$2$};
\node at (2, .4) {$3$};

\node at (0, -.4) {$v_j$};
\node at (1.9, -.4) {$v_j'$};
\node at (3.4, 1.73) {$u_1$};
\node at (3.4, -1.73) {$u_2$};

\node (A1_1) at (0, 0) {$\bullet$};
\node (A1_2) at (3, 1.73) {$\bullet$};
\node (A1_3) at (3, -1.73) {$\bullet$};        \node (A1_4) at (2, 0) {$\bullet$};
\node (B1_2) at (-2, 0) {$\bullet$};    
    
\draw[thick] (A1_1) to (A1_2);
\draw[thick] (A1_1) to (A1_3);
\draw[thick] (A1_1) to (A1_4);
\draw[thick] (A1_2) to (A1_3);
\draw[thick] (A1_2) to (A1_4);
\draw[thick] (A1_3) to (A1_4);
\draw[thick] (A1_1) to (B1_2);
\end{tikzpicture}\]


Using the constraints imposed by Lemma~\ref{lem:vertex_pairs}, one can verify that we have 
\[
|\supp(u_1) \cap \supp(u_2) \cap \supp(v_j)| = 1
\]
under any embedding. We leave this verification to the reader, which completes the proof of this case.
\end{proof}

\begin{case}
         $k=1$.
    \end{case}

    \begin{proof}[Proof of Case]
We now assume $k=1$ and prove the desired statement.
 This implies that $\ell\geq 3$. Since $|\supp(u_1)| = 2$ and $|\supp(u_1) \cap \supp(v_j)| = 1$ for all $j$, the pigeonhole principle implies that, up to relabeling, there exist distinct $v_1$ and $v_2$ such that
\begin{equation}\label{eq:k=1l>2}
|\supp(u_1) \cap \supp(v_1) \cap \supp(v_2)| = 1.
\end{equation}
We will conclude the proof in this case by showing for arbitrary $j>2$, we have
\[
|\supp(u_1)\cap \supp(v_1)\cap \supp(v_2)\cap \supp(v_j)|=1.
\]
Again, by Conditions~\ref{cond:linking_distinct_components} and~\ref{cond:weight_three_vertices}, there exists $\beta \neq \alpha$ such that $v_j \in G_\beta$ but $v_{1}, v_2 \notin G_\beta$. By considering a vertex of weight two in $G_\beta$, we see that $\Gamma$ contains an induced subgraph of the form:
\[\begin{tikzpicture}[xscale=1,yscale=1,baseline={(0,0)}]

\node at (-2, .4) {$2$};
\node at (0, .4) {$3$};
\node at (3, 1.73+.4) {$3$};
\node at (3, -1.73-.4) {$3$};
\node at (2, .4) {$2$};

\node at (0, -.4) {$v_j$};
\node at (2, -.4) {$u_1$};
\node at (3.4, 1.73) {$v_1$};
\node at (3.4, -1.73) {$v_2$};
    
\node (A1_1) at (0, 0) {$\bullet$};
\node (A1_2) at (3, 1.73) {$\bullet$};
\node (A1_3) at (3, -1.73) {$\bullet$};        \node (A1_4) at (2, 0) {$\bullet$};
\node (B1_2) at (-2, 0) {$\bullet$};    
    
\draw[thick] (A1_1) to (A1_2);
\draw[thick] (A1_1) to (A1_3);
\draw[thick] (A1_1) to (A1_4);
\draw[thick] (A1_2) to (A1_3);
\draw[thick] (A1_2) to (A1_4);
\draw[thick] (A1_3) to (A1_4);
\draw[thick] (A1_1) to (B1_2);
\end{tikzpicture}\]
Using \eqref{eq:k=1l>2} and the constraints from Lemma~\ref{lem:vertex_pairs}, one quickly finds that
\[
|\supp(u_1) \cap \supp(v_1) \cap \supp(v_2) \cap \supp(v_j)| = 1
\]
must hold. Again, we leave this verification to the reader. This completes the proof for this case.
\end{proof}
Combining the two cases concludes the proof of the lemma.
\end{proof}

The following lemma is the main result of this section and will be used to prove the general case of Theorem~\ref{thm:main}.

\begin{lem}\label{lem:annoying_embedding_lemma}
Let $\Gamma = \bigcup_{\alpha=1}^n G_\alpha$ be a nice graph such that each $G_\alpha$ contains at least four vertices, including at least one vertex of weight two and at least two vertices of weight three. Then, for any embedding of $\Gamma$ into $\Z^N$, we have
    \[
    N\geq n+V
    \]
    where $V$ is the number of vertices in $\Gamma$.
\end{lem}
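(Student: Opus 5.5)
The plan is to exhibit $n+V$ distinct indices in $\{1,\dots,N\}$, one for each subgraph $G_\alpha$ and one ``private'' index for each vertex. The main tools are Lemma~\ref{lem:general_clique} and Lemma~\ref{lem:vertex_pairs}. Recall from the proof of Lemma~\ref{lem:vertex_pairs} that every vertex $v$ satisfies $|\supp(v)| = v\cdot v \in \{2,3\}$.

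\emph{Step 1: clique indices.} For each $\alpha$, Lemma~\ref{lem:general_clique} gives a unique index $c_\alpha$ with $\{c_\alpha\} = \bigcap_{v\in G_\alpha}\supp(v)$. I claim that $c_\alpha \neq c_\beta$ for $\alpha\neq\beta$.
\begin{itemize}
\item Choose weight-two vertices $u\in G_\alpha$ and $u'\in G_\beta$.
\item By Condition~\ref{cond:weight_three_vertices} and Condition~\ref{cond:linking_distinct_components}, a weight-two vertex lies in exactly one $G_\gamma$. Hence $u\neq u'$, and by Condition~\ref{cond:Gi_complete} they are not adjacent.
\item Lemma~\ref{lem:vertex_pairs} then gives $\supp(u)\cap\supp(u')=\emptyset$, which is impossible if $c_\alpha=c_\beta$.
\end{itemize}
This yields $n$ distinct indices.

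\emph{Step 2: private indices.}
\begin{itemize}
\item A weight-two vertex $u$ lies in a single $G_\alpha$, so $\supp(u)=\{c_\alpha,f_u\}$ for some index $f_u \neq c_\alpha$.
\item A weight-three vertex $v$ lies in exactly two subgraphs $G_\alpha, G_\beta$. Its support contains $c_\alpha\neq c_\beta$, so $\supp(v)=\{c_\alpha,c_\beta,f_v\}$ for one remaining index $f_v$.
\end{itemize}
In both cases $\supp(v)$ consists of the clique indices $c_\gamma$ with $v\in G_\gamma$, together with one extra index $f_v$.

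\emph{Step 3: distinctness.} This is the main point to check. First, suppose $f_v\in\supp(w)$ for some vertex $w\neq v$.
\begin{itemize}
\item If $w$ is not adjacent to $v$, then $\supp(v)\cap\supp(w)=\emptyset$ by Lemma~\ref{lem:vertex_pairs}, a contradiction.
\item If $w$ is adjacent to $v$, they share some $G_\gamma$, so $c_\gamma\in\supp(v)\cap\supp(w)$. Since $f_v\neq c_\gamma$, the intersection has size at least $2$, again contradicting Lemma~\ref{lem:vertex_pairs}.
\end{itemize}
So the indices $f_v$ are pairwise distinct.

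Second, suppose $f_v=c_\gamma$ for some $\gamma$. By construction $v\notin G_\gamma$. Take a weight-two vertex $u\in G_\gamma$. Since $u$ lies only in $G_\gamma$ and $v\notin G_\gamma$, they are not adjacent by Condition~\ref{cond:Gi_complete}. Yet $c_\gamma \in \supp(u)\cap\supp(v)$, contradicting Lemma~\ref{lem:vertex_pairs}.

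Therefore the indices $\{c_\alpha\}_\alpha\cup\{f_v\}_{v\in\Gamma}$ are pairwise distinct, and there are $n+V$ of them. Since they all lie in $\{1,\dots,N\}$, we conclude $N\ge n+V$.
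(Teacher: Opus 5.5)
Your proof is correct and is essentially the paper's argument. Your indices $c_\alpha$ from Lemma~\ref{lem:general_clique} play the role of the basis vectors $e_1,\dots,e_n$ fixed in the paper's second claim, each vertex then has exactly one further support index, and Lemma~\ref{lem:vertex_pairs} forces these indices to be distinct from each other and from the $c_\gamma$. The only difference is that you show the $c_\alpha$ are pairwise distinct directly, using a pair of non-adjacent weight-two vertices, rather than through the paper's first claim.
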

\begin{rem}
In fact, the hypotheses of Lemma~\ref{lem:annoying_embedding_lemma} are sufficient to show that the embedding of $\Gamma$ is unique up to automorphism of $\mathbb{Z}^N$; such a graph is referred to as \emph{rigid} in \cite[Definition~4.2]{Aceto-McCoy-Park:2022-1}. However, we will have no need for this stronger conclusion.
\end{rem}


\begin{proof}
\setcounter{claim}{0}
    The hypotheses of Lemma~\ref{lem:vertex_pairs} and Lemma~\ref{lem:general_clique} are satisfied so their conclusions apply.

    \begin{claim}\label{claim:non-empty_intersection}
       Let $v_1, \dots, v_k$ be a set of $k \geq 2$ distinct vertices satisfying
\[
\bigcap_{j=1}^k \supp(v_j) \neq \emptyset.
\]
    Then there is a unique $G_\alpha$ containing all of $v_1, \dots, v_k$, and
\[
\left|\bigcap_{j=1}^k \supp(v_j)\right| = 1.
\]
    \end{claim}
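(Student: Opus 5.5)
The plan is to reduce the claim to the pairwise information of Lemma~\ref{lem:vertex_pairs} and then use Lemma~\ref{lem:general_clique} for the cardinality.

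\emph{Step 1: pairwise adjacency and the first bound.} Suppose $\bigcap_j \supp(v_j)\neq\emptyset$. Then for every pair $i\neq j$ we have $\supp(v_i)\cap\supp(v_j)\neq\emptyset$. Lemma~\ref{lem:vertex_pairs} then forces $v_i$ and $v_j$ to be adjacent, and gives $|\supp(v_i)\cap\supp(v_j)|=1$. The cardinality bound follows at once: since $k\ge2$, the intersection $\bigcap_j\supp(v_j)$ is a nonempty subset of $\supp(v_1)\cap\supp(v_2)$, which is a singleton. So the intersection has exactly one element.

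\emph{Step 2: a common $G_\alpha$ exists.} We know the $v_j$ are pairwise adjacent, but a set of pairwise adjacent vertices need not lie in one clique. So we must show that $v_1,\dots,v_k$ all lie in a single $G_\alpha$. Consider $v_1,v_2$. By Condition~\ref{cond:Gi_complete} they lie in some $G_\alpha$, and this $G_\alpha$ is unique because every edge lies in exactly one $G_\alpha$. Let $e_i$ be the common support element. By Lemma~\ref{lem:general_clique}, $\bigcap_{v\in G_\alpha}\supp(v)$ is a single element. This element lies in $\supp(v_1)\cap\supp(v_2)=\{e_i\}$, so it equals $e_i$. Hence every vertex of $G_\alpha$ has $e_i$ in its support. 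Now take any other $v_j$ and suppose $v_j\notin G_\alpha$. Since $v_j$ contains $e_i$ in its support, pick a vertex $w\in G_\alpha$ not adjacent to $v_j$. Then $\supp(w)\cap\supp(v_j)\ni e_i$ is nonempty, contradicting Lemma~\ref{lem:vertex_pairs}. This gives $v_j\in G_\alpha$ for all $j$.

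\emph{Step 3 (the main obstacle): finding the vertex $w$.} The key point is showing that every vertex $x\notin G_\alpha$ has a non-neighbour in $G_\alpha$. By Conditions~\ref{cond:Gi_complete} and~\ref{cond:linking_distinct_components}, the neighbours of $x$ inside $G_\alpha$ are exactly the vertices of $G_\alpha\cap G_\beta$ for the (at most two, by Condition~\ref{cond:weight_three_vertices}) cliques $G_\beta$ containing $x$. Each such intersection is at most one weight-three vertex. So $x$ has at most two neighbours in $G_\alpha$, and these neighbours have weight three. Meanwhile $G_\alpha$ has at least four vertices, including a weight-two vertex, so a non-neighbour exists; for instance, the weight-two vertex of $G_\alpha$ is never adjacent to $x$.

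\emph{Step 4: uniqueness of the clique.} If two distinct cliques contained all the $v_j$, they would share the two distinct vertices $v_1$ and $v_2$, contradicting Condition~\ref{cond:linking_distinct_components}.
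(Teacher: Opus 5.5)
Your proof is correct, and it departs from the paper's at exactly the point you identify as delicate. The paper gets the cardinality statement as in your Step~1. It then concludes that the $v_j$ lie in a common $G_\alpha$ simply because they span a complete subgraph and every edge lies in exactly one $G_\alpha$.

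For $k\geq 3$ that inference does not hold on its own. Weight-three vertices $x\in G_\alpha\cap G_\beta$, $y\in G_\beta\cap G_\gamma$, $z\in G_\gamma\cap G_\alpha$ are pairwise adjacent but share no clique. Such triangles do arise in the graphs built in the proof of Theorem~\ref{thm:k2minus1-bound} whenever three lines are pairwise residual; the triangle $\widetilde{\Sigma}_1,\widetilde{\Sigma}_2,\widetilde{\Sigma}_3$ in the paper's illustration is one.

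Your Steps~2--3 supply the missing argument:
\begin{itemize}
\item Lemma~\ref{lem:general_clique} identifies the common support element of $v_1,v_2$ with the distinguished element of their clique $G_\alpha$.
\item A weight-two vertex of $G_\alpha$ is never adjacent to a vertex outside $G_\alpha$, by Conditions~\ref{cond:Gi_complete} and~\ref{cond:linking_distinct_components}.
\item Lemma~\ref{lem:vertex_pairs} applied to that vertex then excludes every $v_j\notin G_\alpha$.
\end{itemize}
So the paper's version is shorter and uses only Lemma~\ref{lem:vertex_pairs}. Yours costs one appeal to Lemma~\ref{lem:general_clique}, whose hypotheses are in force here, but it actually closes the argument.

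One small correction: a weight-two vertex lies in at most one $G_\beta$ by Condition~\ref{cond:linking_distinct_components}, not Condition~\ref{cond:weight_three_vertices}. In any case, neither that count nor the bound $|G_\alpha|\geq 4$ is needed in Step~3. You only use that every neighbour of $x$ in $G_\alpha$ has weight three.
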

    \begin{proof}[Proof of Claim]
    Since $\bigcap_{j=1}^k \supp(v_j) \subset \supp(v_1) \cap \supp(v_2)$, Lemma~\ref{lem:vertex_pairs} implies that
$\left|\bigcap_{j=1}^k \supp(v_j)\right| = 1$.
Furthermore, Lemma~\ref{lem:vertex_pairs} implies that $v_1, \dots, v_k$ induce a complete subgraph of $\Gamma$. Since every edge of $\Gamma$ is contained in precisely one of the $G_\alpha$, it follows that all of $v_1, \dots, v_k$ must be contained in the same subgraph $G_\alpha$ of $\Gamma$.
    \end{proof}
    
    \begin{claim}\label{claim:multisupport}
        Up to automorphism of $\mathbb{Z}^N$, we may order the orthonormal basis $\{e_1, \dots, e_N\}$ so that, for any $1 \leq \alpha \leq n$, and any vertex $v$ we have $v \in G_\alpha$ if and only if $e_\alpha \in \supp(v)$.
    \end{claim}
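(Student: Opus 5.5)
By Lemma~\ref{lem:general_clique}, for each $\alpha$ the intersection $\bigcap_{v\in G_\alpha}\supp(v)$ consists of a single index, which I denote $i_\alpha$. The plan is to show that the map $\alpha\mapsto i_\alpha$ is injective and that $e_{i_\alpha}$ detects membership in $G_\alpha$. After that, permuting the basis so that $i_\alpha=\alpha$ for every $\alpha$ gives the claim. Such a permutation of an orthonormal basis is an automorphism of $\mathbb{Z}^N$.

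First I would establish the detection property: $e_{i_\alpha}\in\supp(v)$ if and only if $v\in G_\alpha$. The ``if'' direction holds by the definition of $i_\alpha$. For the converse, suppose $e_{i_\alpha}\in\supp(v)$ with $v\notin G_\alpha$, and pick any $u\in G_\alpha$. Then $\supp(u)\cap\supp(v)\ni i_\alpha$ is nonempty. Claim~\ref{claim:non-empty_intersection} (or directly Lemma~\ref{lem:vertex_pairs}) then says $u$ and $v$ are adjacent.

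Now apply Claim~\ref{claim:non-empty_intersection} to the whole set $G_\alpha\cup\{v\}$, all of whose supports contain $i_\alpha$. This set has at least two vertices, since $|G_\alpha|\ge 4$. The claim gives a unique $G_\beta$ containing all of $G_\alpha\cup\{v\}$. Since $G_\beta\supseteq G_\alpha$ and $G_\alpha$ has at least two vertices, Condition~\ref{cond:linking_distinct_components} forces $\beta=\alpha$. Hence $v\in G_\alpha$, a contradiction.

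Injectivity then follows. If $i_\alpha=i_\beta$ with $\alpha\ne\beta$, the detection property gives $G_\alpha=\{v: e_{i_\alpha}\in\supp(v)\}=G_\beta$. This contradicts Condition~\ref{cond:linking_distinct_components}, since two distinct subgraphs share at most one vertex while each has at least four. Note this also rules out the degenerate possibility that two indices label the same subgraph.

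So the indices $i_1,\dots,i_n$ are distinct. Reordering the basis so that $e_{i_\alpha}$ becomes $e_\alpha$ yields the statement. The only step needing care is the converse direction of the detection property, i.e.\ ensuring that no vertex outside $G_\alpha$ picks up the distinguished coordinate. This is handled as above by applying Claim~\ref{claim:non-empty_intersection} to all of $G_\alpha$ together with $v$, rather than to a single pair.
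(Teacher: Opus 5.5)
Your proof is correct and follows essentially the same route as the paper. Both arguments take the single index from Lemma~\ref{lem:general_clique} for each $G_\alpha$ and prove the converse by applying Claim~\ref{claim:non-empty_intersection} to $G_\alpha\cup\{v\}$ together with Condition~\ref{cond:linking_distinct_components}. The only difference is the order: you derive injectivity of $\alpha\mapsto i_\alpha$ from the detection property, whereas the paper first gets disjointness of the intersections directly from Claim~\ref{claim:non-empty_intersection}, relabels, and then proves the converse.
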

    \begin{proof}[Proof of Claim]
    Lemma~\ref{lem:general_clique} implies that, for each $G_\alpha$, we have
$\left|\bigcap_{v \in G_\alpha} \supp(v)\right| = 1$.
Moreover, Claim~\ref{claim:non-empty_intersection} implies that for $\alpha \neq \beta$, the sets $\bigcap_{u \in G_\alpha} \supp(u)$ and $\bigcap_{u \in G_\beta} \supp(u)$ are disjoint. Thus, we may perform an automorphism of $\mathbb{Z}^N$ and assume that
$$\bigcap_{v \in G_\alpha} \supp(v) = \{e_\alpha\}$$
for each $1 \leq \alpha \leq n$. This implies the forward direction.

For the converse, suppose we have performed such an automorphism. If $v$ is a vertex with $e_\alpha \in \supp(v)$, then
\[
\supp(v) \cap \bigcap_{u \in G_\alpha} \supp(u) \neq \emptyset,
\]
and it follows from Claim~\ref{claim:non-empty_intersection} that $v \in G_\alpha$, as required.
    \end{proof}
    Assume now that we have performed an automorphism of $\mathbb{Z}^N$ so that the conclusion of Claim~\ref{claim:multisupport} holds. For each vertex $v \in \Gamma$, we observe that there is a unique basis vector $e_j$ with $j > n$ such that $e_j \in \supp(v)$, as follows. According to Claim~\ref{claim:multisupport}, the set $\supp(v) \cap \{e_1, \dots, e_n\}$ records which subgraphs $G_\alpha$ contain $v$. By Condition~\ref{cond:linking_distinct_components} (if $v$ has weight two) and Condition~\ref{cond:weight_three_vertices} (if $v$ has weight three), we see that $v$ is contained in exactly $w(v) - 1$ of the $G_\alpha$. Since $|\supp(v)| = w(v)$, this leaves precisely one basis vector $e_j \in \supp(v)$ with $j > n$.

    On the other hand, Claim~\ref{claim:non-empty_intersection} and Claim~\ref{claim:multisupport} together show that for any pair of distinct vertices $u, v \in \Gamma$, we have
\[
\supp(u) \cap \supp(v) \subset \{e_1, \dots, e_n\}.
\]
It follows that there are $V$ distinct basis vectors $e_j$ with $j > n$ that appear in the support of a (necessarily unique) vertex. In conclusion, we see that
\[
|\supp(\Gamma)| \geq n + V.
\]
Since $N \geq |\supp(\Gamma)|$, this gives the desired bound.
\end{proof}

 The following two lemmas will be used to deal with particular special cases of Theorem~\ref{thm:main}.

\begin{lem}\label{lem:crude_bound}
Let $\Gamma=\cup_{\alpha=1}^n G_\alpha$ be a nice graph such that each $G_\alpha$ contains at least two vertices of weight two and at least one vertex of weight three. Then, for any embedding of $\Gamma$ into $\Z^N$, we have
    \[N\geq n+V_2,\]
    where $V_2$ is the number of weight two vertices in $\Gamma$. 
\end{lem}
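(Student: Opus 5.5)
We plan to mirror the proof of Lemma~\ref{lem:annoying_embedding_lemma}, restricting attention to the weight two vertices. The hypotheses here differ from those of Lemma~\ref{lem:vertex_pairs}, since $G_\alpha$ is only required to contain at least three vertices (two of weight two and one of weight three). Each $G_\alpha$ therefore has at least three vertices, including one of each weight, so Lemma~\ref{lem:vertex_pairs} applies. Lemma~\ref{lem:2-graph_clique} also applies, because each $G_\alpha$ contains a weight three vertex.

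For each $\alpha$, let $u_1,\dots,u_k$ (with $k\geq 2$) be the weight two vertices of $G_\alpha$. By Lemma~\ref{lem:2-graph_clique}, their supports share exactly one common index, which we call $f_\alpha$.

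The first step is to show that the $f_\alpha$ are pairwise distinct. Suppose $f_\alpha=f_\beta$ with $\alpha\neq\beta$. Pick a weight two vertex $u\in G_\alpha$ and a weight two vertex $u'\in G_\beta$. These are distinct, since by Condition~\ref{cond:linking_distinct_components} the subgraphs $G_\alpha$ and $G_\beta$ meet at most in a weight three vertex. Their supports both contain $f_\alpha$, so Lemma~\ref{lem:vertex_pairs} forces $u$ and $u'$ to be adjacent. By Condition~\ref{cond:Gi_complete} they then lie in a common $G_\gamma$. But a weight two vertex lies in only one subgraph, because any shared vertex has weight three. Hence $\gamma=\alpha=\beta$, a contradiction. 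After an automorphism of $\mathbb{Z}^N$ we may therefore take $f_\alpha=e_\alpha$ for $1\leq\alpha\leq n$.

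Next, each weight two vertex $u$ lies in exactly one $G_\alpha$, so $e_\alpha\in\supp(u)$. Since $|\supp(u)|=2$, there is one further index $j_u\neq\alpha$. We claim that $j_u>n$. Suppose instead $j_u=\beta\leq n$ with $\beta\neq\alpha$. Then $\supp(u)$ meets $\supp(u')$ for a weight two vertex $u'\in G_\beta$. By Lemma~\ref{lem:vertex_pairs}, $u$ and $u'$ would be adjacent, and we get the same contradiction as before.

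Finally, we show the indices $j_u$ are pairwise distinct. Take two distinct weight two vertices $u,u'$. If they are not adjacent, their supports are disjoint by Lemma~\ref{lem:vertex_pairs}, so $j_u\neq j_{u'}$. If they are adjacent, they lie in a common $G_\alpha$, and their unique common index is $e_\alpha$; hence again $j_u\neq j_{u'}$. This yields $V_2$ distinct basis vectors with index greater than $n$, in addition to $e_1,\dots,e_n$, so $N\geq |\supp(\Gamma)|\geq n+V_2$.

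The only delicate point is confirming that the pairwise conclusions of Lemma~\ref{lem:vertex_pairs} are available under these weaker hypotheses; as checked above, they are.
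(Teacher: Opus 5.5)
Your proof is correct and is essentially the paper's argument. The paper uses Lemma~\ref{lem:2-graph_clique} to get that the weight-two vertices of each $G_\alpha$ have a combined support of size one more than their number, and Lemma~\ref{lem:vertex_pairs} to get that these supports are disjoint for distinct $\alpha$ (weight-two vertices in different $G_\alpha$ are non-adjacent), and then sums over $\alpha$; your normalization $f_\alpha=e_\alpha$ and your checks that the $f_\alpha$ are distinct and that $j_u>n$ are simply an unpacked form of that same disjointness.
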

\begin{proof}
For each $G_\alpha$, let $G_\alpha^{(2)}$ denote the subgraph consisting of all vertices of weight two. Lemma~\ref{lem:2-graph_clique} implies that
\[
\left|\supp\left(G_\alpha^{(2)}\right)\right| = \left|G_\alpha^{(2)}\right| + 1.
\]
Moreover, Lemma~\ref{lem:vertex_pairs} implies that for $\alpha \neq \beta$, the supports of $G_\alpha^{(2)}$ and $G_\beta^{(2)}$ are disjoint. Therefore, we have
\[
N \geq \sum_{\alpha=1}^n \left|\supp\left(G_\alpha^{(2)}\right)\right| =n + V_2,
\]
as desired.
\end{proof}

\begin{lem}\label{lem:crude_bound2}
    Let $\Gamma=\cup_{\alpha=1}^n G_\alpha$ be a nice graph such that each $G_\alpha$ contains at least one vertex of weight two and at least two vertices of weight three. Then, for any embedding of $\Gamma$ into $\Z^N$, we have
    \[N\geq \frac{7n}{3}.\] 
\end{lem}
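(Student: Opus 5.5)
The plan is a coordinate-counting argument that splits $\supp(\Gamma)$ into coordinates used by weight-two vertices and \emph{free} coordinates used only by weight-three vertices. First note that the hypotheses of Lemma~\ref{lem:vertex_pairs} hold, since each $G_\alpha$ has at least three vertices. So any two distinct vertices meet in exactly one coordinate if adjacent and in none otherwise, and every vertex has $|\supp(v)|=w(v)$. Lemma~\ref{lem:general_clique} is not available, because a $G_\alpha$ may have exactly three vertices. We therefore cannot reduce to the clean structure of Lemma~\ref{lem:annoying_embedding_lemma} and will count directly.

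\textbf{Step 1 (weight-two coordinates).} Let $G_\alpha^{(2)}$ be the set of weight-two vertices of $G_\alpha$ and $V_2^\alpha=|G_\alpha^{(2)}|\geq 1$. By Lemma~\ref{lem:2-graph_clique}, or trivially when $V_2^\alpha=1$, we have $|\supp(G_\alpha^{(2)})|=V_2^\alpha+1$. By Lemma~\ref{lem:vertex_pairs}, these sets are pairwise disjoint for different $\alpha$. Let $W$ be their union, so that $|W|=n+V_2\geq 2n$. Let $F=\supp(\Gamma)\setminus W$ be the set of free coordinates. Then $N\geq n+V_2+|F|$, and it suffices to prove
\[
(V_2-n)+|F|\geq \tfrac{n}{3}.
\]

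\textbf{Step 2 (multiplicity of a free coordinate).} Suppose $e_i\in F$. The vertices containing $e_i$ all have weight three and pairwise share a coordinate, so by Lemma~\ref{lem:vertex_pairs} they form a clique in $\Gamma$. I claim such a clique has at most three vertices.
\begin{itemize}
\item If three of these vertices lie in a common $G_\alpha$, pick a weight-two vertex $u\in G_\alpha$. Then $u$ meets each of the three in exactly one coordinate, and that coordinate is not $e_i$ because $e_i$ is free. Since $|\supp(u)|=2$, pigeonhole forces two of them to share a coordinate of $u$ in addition to $e_i$. This contradicts Lemma~\ref{lem:vertex_pairs}.
\item Otherwise the clique is a ``triangle'' of weight-three vertices whose three edges lie in three different $G_\alpha$'s. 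By Condition~\ref{cond:weight_three_vertices}, each weight-three vertex lies in only two $G$'s, so such a clique cannot contain a fourth vertex.
\end{itemize}
Hence each free coordinate lies in at most three vertices.

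\textbf{Step 3 (charging).} Let $v\in G_\alpha\cap G_\beta$ be a weight-three vertex. Every weight-two vertex meeting $\supp(v)$ is adjacent to $v$, hence lies in $G_\alpha\cup G_\beta$, and meets $\supp(v)$ in exactly one coordinate. Two weight-two vertices of the same $G_\alpha$ share their unique common coordinate, by Lemma~\ref{lem:2-graph_clique}. Hence if $V_2^\alpha=V_2^\beta=1$, then $u_\alpha$ and $u_\beta$ cover exactly two coordinates of $v$, and the third coordinate is free. Call an index $\alpha$ \emph{poor} if $V_2^\alpha=1$. Each poor $G_\alpha$ has at least two weight-three vertices, so there are at least $2$ incidences per poor index. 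Each weight-three vertex with both indices poor has a free coordinate, and by Step~2 each free coordinate is counted at most three times.

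Doing this counting carefully should yield an inequality of the form
\[
|F|\geq \tfrac{1}{3}\bigl(n-c\,(V_2-n)\bigr)
\]
for some small constant $c$. That suffices provided the rich indices, those with $V_2^\alpha\geq 2$, each contribute at least $1\geq \tfrac13$ to $V_2-n$. More precisely, I would assign to each $\alpha$ a charge of $\tfrac13$. A rich $\alpha$ pays from its own extra weight-two vertex. A poor $\alpha$ pays from the free coordinates of its weight-three vertices whose other index is also poor.

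\textbf{Main obstacle.} The hard part is the mixed situation: a poor $G_\alpha$ all of whose weight-three vertices also lie in rich $G_\beta$'s. There, free coordinates may genuinely disappear, because the third coordinate of $v$ can be covered by a second weight-two vertex of $G_\beta$. I would handle this by sharing the surplus of each rich $G_\beta$ among its neighbours. A weight-three vertex $v$ whose third coordinate is covered by weight-two vertices of $G_\beta$ uses a coordinate of $\supp(G_\beta^{(2)})$ other than the common one. The number of such $v$ in $G_\beta$ is therefore at most $V_2^\beta$, since the non-common coordinates of $G_\beta^{(2)}$ are each used once. This bounds how many poor neighbours a rich $G_\beta$ must subsidise by its surplus $V_2^\beta-1$, up to an additive constant, and I expect the final constant to work out to exactly $\tfrac{7}{3}$.
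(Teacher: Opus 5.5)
Your Steps 1--3 are sound. Step 2's bound, that a free coordinate lies in at most three vertices, is correct, and so is Step 3's observation that a weight-three vertex whose two indices are both poor has a free coordinate. But the inequality $(V_2-n)+|F|\geq \tfrac n3$ is never established, and after Step 1 it is the entire content of the lemma. You say careful counting ``should yield'' a bound with ``some small constant $c$'' and that you ``expect'' the constant to be $\tfrac73$. The subsidy scheme for poor indices next to rich ones is only described. Its one concrete input, that each non-common coordinate of $\supp(G_\beta^{(2)})$ is used by at most one weight-three vertex, is asserted without proof. As written, the mixed case is a plan, not an argument.

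The missing idea is a dichotomy for how a weight-three vertex $v\in G_\beta$ meets $\supp(G_\beta^{(2)})$ when $V_2^\beta=r\geq 2$. By Lemmas~\ref{lem:vertex_pairs} and~\ref{lem:2-graph_clique}, the weight-two vertices of $G_\beta$ have supports $\{c,x_1\},\dots,\{c,x_r\}$ with the $x_j$ distinct, and $v$ meets each of them in exactly one coordinate. So either $c\in\supp(v)$ and no $x_j$ lies in $\supp(v)$, or $\{x_1,\dots,x_r\}\subset\supp(v)$. In the second case $v$ still meets a weight-two vertex of its other subgraph, whose support is disjoint from $\supp(G_\beta^{(2)})$; this forces $r=2$. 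Two such vertices would then share both $x_1$ and $x_2$, which is impossible. Hence every weight-three vertex has a free coordinate, except at most one per index $\beta$ with $V_2^\beta=2$.

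Let $R_2$ be the number of such $\beta$. Each $G_\alpha$ has at least two weight-three vertices and each lies in exactly two $G_\alpha$, so $V_3\geq n$. Step 2 then gives $|F|\geq\tfrac13(n-R_2)$, while $V_2\geq n+R_2$. Therefore $N\geq n+V_2+|F|\geq \tfrac{7n}{3}+\tfrac{2R_2}{3}$, and no local charging is needed.

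For comparison, the paper chooses one weight-two vertex $u_\alpha$ and distinct weight-three vertices $v_\alpha\in G_\alpha$. It uses the $2n$ coordinates of the $u_\alpha$ plus one new coordinate per $v_\alpha$, and finds at least $n/3$ pairwise non-adjacent $v_\alpha$ because they induce a union of cycles. Your ``at most three vertices per free coordinate'' replaces that independence argument. In the case actually used ($k=5$, $d=2$, every index poor), your Steps 1--3 already finish the proof. The paper's ``adjacent to precisely two others'' step is automatic only when each $G_\alpha$ has exactly two weight-three vertices; your count does not depend on this.
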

\begin{proof}
First, we choose $2n$ vertices from $\Gamma$, of which $n$ have weight two and $n$ have weight three. Let $u_1, \dots, u_n$ be distinct vertices of weight two, and let $v_1, \dots, v_n$ be distinct vertices of weight three, such that $u_\alpha, v_\alpha \in G_\alpha$ for each $1 \leq \alpha \leq n$. By Lemma~\ref{lem:vertex_pairs}, we have
\[
|\supp(u_1, \dots, u_n)| = 2n.
\]
Now each $v_j$ is adjacent to precisely two of the $u_i$. Thus, for every $v_j$, the support $\supp(v_j)$ contains a unique basis vector that is not contained in $\supp(u_1, \dots, u_n)$. For any pair of non-adjacent vertices $v_j$ and $v_{j'}$, we have that $\supp(v_j)$ and $\supp(v_{j'})$ are disjoint. Therefore, if $\{v_1, \dots, v_n\}$ contains a subset of $m$ vertices that are pairwise non-adjacent, then we have
\[
N \geq |\supp(\Gamma)| \geq 2n + m.
\]
Since each vertex in $\{v_1, \dots, v_n\}$ is adjacent to precisely two others in the set, the subgraph induced by the $v_j$ is a disjoint union of cycles on $n$ vertices. Therefore, there exists a pairwise non-adjacent subset of $\{v_1, \dots, v_n\}$ of size at least $n/3$. This completes the proof.
\end{proof}


\section{Proof of $n \geq k^2-1$}\label{sec:ngeqk^2-1}

In this section, we prove that $n \geq k^2-1$ for smooth
$\C$-realizations of $(n_k)$-configurations.

\begin{thm}\label{thm:k2minus1-bound}
If there exists a smooth $\C$-realization of an $(n_k)$-configuration with $k\geq 3$, then
\[
n \geq  k^2-1.
\]
\end{thm}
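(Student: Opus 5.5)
We argue by contradiction: suppose a smooth $\C$-realization exists with $n\le k^2-2$. By Theorem~\ref{thm:RS} we may also assume $n\ge k^2-5$, so that the number
\[
d=n-1-k(k-1)
\]
of intersection points on a fixed line $L$ lying outside $\mathcal K$ is small and non-negative, and $k^2-n\ge 2$.

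\textbf{Step 1: normalize the intersections.} Perturb the spheres near every point of $\mathcal P\setminus\mathcal K$ so that all intersections there become transverse double points; each sphere then carries exactly $d$ such double points. Blow up negatively (connected sum with $\overline{\CP}^2$) at the $n$ points of $\mathcal K$. The proper transform of each line has square $1-k$ and meets $k$ exceptional $(-1)$-spheres. Each of these meets exactly $k$ line transforms, and the line transforms now meet each other only at the double points.

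\textbf{Step 2: blow up and down to reach a definite manifold.} Next resolve some or all of the double points by positive blow-ups (connected sum with $\CP^2$, replacing the two sheets by $C+E$ and $D-E$, which raises both squares by one and makes them disjoint). We choose which double points to resolve so that every line transform ends up with square $-2$ or $-3$. The sign bookkeeping $1-k+(\text{number resolved})$ is arranged using $d\le k-3$, which follows from $n\le k^2-2$. We then blow down the negative spheres that are forced to be $(-1)$-spheres, which decreases $b_2^-$. After reversing orientation, the manifold $W$ is to be positive definite with $b_2(W)=N$, where $N$ is computed explicitly from $n$, $k$ and the number of blow-ups and blow-downs. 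Donaldson's theorem then identifies $H_2(W)$ with $\Z^N$. The surviving spheres span a weighted graph $\Gamma$ whose weights are $2$ and $3$. Its cliques $G_\alpha$, $\alpha=1,\dots,n$, are the images of the lines: two surviving spheres meet exactly when they come from a common line. The weight-three vertices are precisely the spheres shared by two lines, coming from the unresolved double points. One then checks Conditions (I)--(III) of Definition~\ref{def:nice_graph} directly from the incidence structure, using that two lines meet in exactly one point.

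\textbf{Step 3: compare the counts.} Since $\Gamma$ is nice and, for generic $k$, each $G_\alpha$ contains at least four vertices with at least one of weight two and two of weight three, Lemma~\ref{lem:annoying_embedding_lemma} gives $N\ge n+V$. Substituting the explicit values of $N$ and $V$ yields an inequality equivalent to $n\ge k^2-1$, which is the desired contradiction. When the clique-composition hypotheses fail (small $d$, for example $d=0$, or $k=3$), we use Lemma~\ref{lem:crude_bound} or Lemma~\ref{lem:crude_bound2} instead, which should still exceed $b_2(W)$.

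\textbf{Main obstacle.} I expect the hardest step to be Step 2. We must choose the blow-up centers and the blow-downs so that simultaneously the resulting form is definite, all weights are exactly $2$ or $3$, and the graph is genuinely nice. The last point is delicate because unresolved double points must lie on exactly two cliques, and spurious intersections must not arise. The approach is to first fix the final weights per line and then solve the resulting linear bookkeeping. The borderline cases between the various lemmas will likely need to be checked separately.
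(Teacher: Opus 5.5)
\textbf{There is a genuine gap in Step 2, and it is where the proof lives.} As written, Step 2 does not produce a definite manifold.

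The $n$ negative blow-ups at $\mathcal K$ give $b_2^-=n$. The paper removes this by blowing down the $n$ line transforms themselves. For that, they must be pairwise disjoint $(-1)$-spheres, not spheres of square $-2$ or $-3$; blowing down the $(-1)$-spheres $E_p$ merely undoes Step~1.

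Concretely, the paper does the following:
\begin{enumerate}
\item It blows up positively at \emph{all} $nd/2$ residual double points, and in addition at $k-d-2$ generic points of each line. Then each $S_\alpha''$ has square $1-k+d+(k-d-2)=-1$, and the $S_\alpha''$ are pairwise disjoint.
\item Blowing them down gives a positive-definite $X$ with $b_2(X)=N+1$, where $N=nd/2+n(k-d-2)$ is the number of positive blow-ups.
\item The vertices of $\Gamma$ are the $N$ positive exceptional spheres. Those at double points meet two $S_\alpha''$ and end with square $3$; the generic ones meet one and end with square $2$.
\end{enumerate}

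Your description gets this wrong in three ways:
\begin{enumerate}
\item The weight-three vertices come from \emph{unresolved} double points. This is backwards: an unresolved double point leaves two line transforms intersecting, so they are not disjoint and cannot be blown down simultaneously.
\item Nothing in your construction produces weight-two vertices.
\item If $d\le k-5$, resolving double points alone cannot raise $1-k$ to $-3$ or $-2$.
\end{enumerate}

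\textbf{The count and the small cases.} With the correct construction the count is immediate, not ``equivalent to $n\ge k^2-1$''. Since $V=N$, Lemma~\ref{lem:annoying_embedding_lemma} would give $N+1\ge n+N$, which is absurd. The hypothesis $d\le k-3$ enters only to guarantee $k-d-2\ge 1$, i.e.\ a weight-two vertex in every $G_\alpha$.

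Your fallback for small cases also fails:
\begin{enumerate}
\item When $d=0$ there are no weight-three vertices, so neither Lemma~\ref{lem:crude_bound} nor Lemma~\ref{lem:crude_bound2} applies. For $k=3$ the graph is seven isolated square-two vertices, which do embed in $\Z^8$, e.g.\ as $e_1\pm e_2,\ e_3\pm e_4,\ e_5\pm e_6,\ e_7+e_8$. So no argument of this type can handle it.
\item For $k=4$, $d=1$, each $G_\alpha$ is a single edge, outside the hypotheses of all three lemmas.
\end{enumerate}

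The paper handles these cases by external input:
\begin{enumerate}
\item It disposes of $k=3,4$ by citing Ruberman--Starkston (the planes of orders $2$ and $3$, and the $(14_4)$-configuration).
\item It takes $d\ge 1$ from the topological results.
\item It then uses Lemma~\ref{lem:crude_bound} for $d=1$, Lemma~\ref{lem:annoying_embedding_lemma} for $d\ge 2$ with $k\ge 6$, and Lemma~\ref{lem:crude_bound2} for $k=5$, $d=2$.
\end{enumerate}
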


\begin{proof}

By \eqref{eq:n_k_inequality}, the only case of interest with $k=3$
is that of a $(7_3)$-configuration, while the only cases of interest with
$k=4$ are those of $(13_4)$- and $(14_4)$-configurations. The
$(7_3)$- and $(13_4)$-configurations attain equality in
\eqref{eq:n_k_inequality}; hence they are finite projective planes of
orders $2$ and $3$, respectively. These planes are unique~\cite{Batten:1997}
and are therefore $\mathbb{P}^2(\mathbb{F}_2)$ and
$\mathbb{P}^2(\mathbb{F}_3)$. Thus they are not topologically
$\C$-realizable by \cite[Theorem~1.1]{Ruberman-Starkston:2019}. The unique
$(14_4)$-configuration is not topologically $\C$-realizable by
\cite[Theorem~1.2]{Ruberman-Starkston:2019}. Thus we may assume $k\geq 5$.

Let $S_1, \dots, S_n \subset \mathbb{CP}^2$ be a collection of spheres forming a smooth $\mathbb{C}$-realization of an $(n_k)$-configuration. By \eqref{eq:n_k_inequality}, we may write
\[
n = (k^2 - k + 1) + d,
\]
where Theorem~\ref{thm:RS} shows that $d \geq 1$.

We aim to show that if $d \leq k - 3$, then no smooth $\mathbb{C}$-realization of the configuration exists, which in turn yields the desired bound. Thus we will assume $d\leq k-3$. In particular, we have
\[
k-d-2\geq 1.
\]

First, perform negative blow-ups on $\mathbb{CP}^2$ at each of the $k$-fold intersection points of the $S_\alpha$ corresponding to the subset $\mathcal{K}$ in the definition of an $(n_k)$-configuration. In addition, perform a smooth isotopy so that all remaining intersection points become transverse double points. Following these blow-ups, the proper transforms of $S_1, \dots, S_n$ form a collection of smooth spheres $S_1', \dots, S_n'$ in $\mathbb{CP}^2 \# n \, \overline{\mathbb{CP}}^2$. Since we blow up each $S_\alpha$ at $k$ points, each $S_\alpha'$ has self-intersection number $1 - k$. Moreover, because each blow-up on $S_\alpha$ removes its intersection with $k - 1$ other spheres, each $S_\alpha'$ intersects precisely $d$ other spheres, all transversely and at double points.


Next, perform positive blow-ups at each of the $nd/2$ intersection points between pairs of the $S_\alpha'$, and then perform an additional $(k - d - 2)$ positive blow-ups at distinct points on each $S_\alpha'$. This results in a total of
\[
N = \frac{nd}{2} + n(k - d - 2) = n\left(k - \frac{d}{2} - 2\right)
\]
blow-ups. Let $S_1'', \dots, S_n''$ denote the proper transforms of the $S_\alpha'$, and let $\Sigma_1, \dots, \Sigma_N$ denote the corresponding exceptional spheres. Then the $S_\alpha''$ and the $\Sigma_i$ are smoothly embedded spheres in
\[
(1 + N)\,\mathbb{CP}^2 \# n\,\overline{\mathbb{CP}}^2.
\]
The following demonstrates this process for a specific example with $n = 23$, $k = 5$, and $d = 2$, under the assumption that the combinatorial solution for the $(23_5)$-configuration under consideration yields a length-three cycle of the $S'_\alpha$ after the initial sequence of blow-ups. 

\[\begin{tikzpicture}[xscale=1,yscale=1,baseline={(0,0)}]
\node (A1_1) at (0, 0) {$\bullet$};
\node (A1_2) at (3, 0) {$\bullet$};
\node (A1_3) at (1.5, 2.6) {$\bullet$};

\node at (-.4, 0) {$S_3'$};
\node at (0, -.4) {$-4$};
\node at (3+.4, 0) {$S_2'$};
\node at (3, -.4) {$-4$};
\node at (1.5-.4, 2.6) {$-4$};
\node at (1.5+.4, 2.6) {$S_1'$};

\begin{scope}[shift={(1,0)}]
\node (B1_1) at (0+7, 0) {$\bullet$};
\node (B1_2) at (3+7, 0) {$\bullet$};
\node (B1_3) at (1.5+7, 2.6) {$\bullet$};

\node at (0+7-.4, 0) {$S_2''$};
\node at (0+7, 0-.4) {$-1$};
\node at (3+7+.4, 0) {$S_3''$};
\node at (3+7, 0-.4) {$-1$};
\node at (1.5+7+.4, 2.6) {$S_1''$};
\node at (1.5+7-.4, 2.6) {$-1$};

\node (C1_1) at ({17/2}, 0) {$\bullet$};
\node (C1_2) at ({4.5/2+7}, 1.3) {$\bullet$};
\node (C1_3) at ({1.5/2+7}, 1.3) {$\bullet$};

\node at ({17/2}, 0+.4) {$\Sigma_2$};
\node at ({17/2}, 0-.4) {$1$};
\node at ({4.5/2+7}, {1.3+.4}) {$1$};
\node at ({4.5/2+7+.4}, {1.3}) {$\Sigma_1$};
\node at ({1.5/2+7}, {1.3+.4}) {$1$};
\node at ({1.5/2+7-.4}, 1.3) {$\Sigma_3$};

\node (D1_1) at (1.5+7, 2.6+1.5-.5) {$\bullet$};
\node (D1_2) at (7-1.06+.35,-1.06+.35) {$\bullet$};
\node (D1_3) at (10+1.06-.35, 0-1.06+.35) {$\bullet$};

\node at (1.5+7-.4, 2.6+1.5-.5) {$1$};
\node at (1.5+7+.4, 2.6+1.5-.5) {$\Sigma_4$};
\node at (7-1.06-.4+.35,-1.06+.35) {$\Sigma_6$};
\node at (7-1.06+.35,-1.06-.4+.35) {$1$};
\node at (10+1.06+.4-.35, 0-1.06+.35) {$\Sigma_5$};
\node at (10+1.06-.35, 0-1.06-.4+.35) {$1$};

\node at (-1.06-.4+.35,-1.06+.35) {$\phantom{Sigma_6}$};
\node at (-1.06-.4+.35,-1.06+.35-.5) {$\phantom{Sigma_6}$};

\draw[thick] (A1_1) to (A1_2);
\draw[thick] (A1_1) to (A1_3);
\draw[thick] (A1_2) to (A1_3);

\draw[thick] (B1_1) to (C1_1);
\draw[thick] (B1_1) to (C1_3);
\draw[thick] (B1_2) to (C1_2);
\draw[thick] (B1_2) to (C1_1);
\draw[thick] (B1_3) to (C1_2);
\draw[thick] (B1_3) to (C1_3);
\draw[thick] (B1_3) to (D1_1);
\draw[thick] (B1_2) to (D1_3);
\draw[thick] (B1_1) to (D1_2);
\draw[thick, ->] (4.2,1.5) to (4.8,1.5);
\end{scope}
\end{tikzpicture}
\]

Since we have performed a total of $k - 2$ blow-ups on each $S_\alpha'$, the $S_\alpha''$ all have self-intersection $-1$. Furthermore, since we blew up at all intersection points between pairs of $S_\alpha'$, the $S_\alpha''$ are all disjoint. Thus, we may perform blow-downs on all the $S_\alpha''$ to convert $(1 + N)\,\mathbb{CP}^2 \# n\,\overline{\mathbb{CP}}^2$ into a smooth, positive-definite 4-manifold $X$ with $b_2(X) = 1 + N$. By Donaldson's diagonalization theorem, the intersection form on $\left(H_2(X;\Z)/ \tors ,Q_{X}\right)$ is equivalent to the standard diagonal lattice $\Z^{N+1}$. Let $\widetilde{\Sigma}_i$ denote the image of $\Sigma_i$ in $X$ after blowing down each of the $S_\alpha''$.

Define a graph $\Gamma$ as follows. Take the vertex set of $\Gamma$ to be the collection of $\Sigma_i$, with an edge between $\Sigma_i$ and $\Sigma_j$ if and only if there exists some $S_\alpha''$ intersecting both $\Sigma_i$ and $\Sigma_j$. We assign a weight of two to $\Sigma_i$ if it intersects precisely one of the $S_\alpha''$, and a weight of three if it intersects precisely two of the $S_\alpha''$.
This graph admits a decomposition $\Gamma = \bigcup_{\alpha = 1}^n G_\alpha$, where each $G_\alpha$ contains the $\Sigma_i$ that intersect $S_\alpha''$. By construction, each $G_\alpha$ contains $k - d - 2 \geq 1$ vertices of weight two and $d$ vertices of weight three. Moreover, $\Gamma = \bigcup_{\alpha = 1}^n G_\alpha$ is nice in the sense of Definition~\ref{def:nice_graph}.

Since the $\Sigma_i$ are pairwise disjoint, we see that two distinct $\widetilde{\Sigma}_i$ and $\widetilde{\Sigma}_j$ intersect if and only if $\Sigma_i$ and $\Sigma_j$ intersect the same sphere $S_\alpha''$, and that this intersection is a single transverse double point. Since each $\Sigma_i$ has self-intersection $+1$, it follows that $\widetilde{\Sigma}_i$ has self-intersection three if $\Sigma_i$ is the exceptional sphere from the blow-up of an intersection point between the $S_\alpha'$ spheres, and self-intersection two if $\Sigma_i$ arises from a blow-up on a single $S_\alpha'$ sphere.
Consequently, the homology classes represented by the $\widetilde{\Sigma}_i$ in the positive-definite 4-manifold $X$, whose intersection form is $\left(H_2(X;\Z)/ \tors ,Q_{X}\right) \cong \mathbb{Z}^{N+1}$, correspond to an embedding of the weighted graph $\Gamma$ into $\mathbb{Z}^{N+1}$.
The following continues the example with $n = 23$, $k = 5$, and $d = 2$, after blowing down each of the $S_\alpha''$. For the corresponding graph $\Gamma = \bigcup_{\alpha = 1}^n G_\alpha$, it illustrates the subgraphs $G_1$, $G_2$, and $G_3$, each of which contains $k - d - 2 = 1$ vertex of weight two and $d = 2$ vertices of weight three.

\[\begin{tikzpicture}[xscale=1.1,yscale=1.1,baseline={(0,0)}]
\node (B1_1) at (0+7, 0) {$\bullet$};
\node (B1_2) at (3+7, 0) {$\bullet$};
\node (B1_3) at (1.5+7, 2.6) {$\bullet$};  
\node at ({7+.75}, .43) {$G_3$};

\node at ({7+.75+1.5}, .43) {$G_2$};
\node at ({7+.75+.75}, .43+1.3) {$G_1$};

\node at (0+7-.4, 0) {$\widetilde{\Sigma}_6$};
\node at (0+7, 0-.4) {$2$};
\node at (3+7+.4, 0) {$\widetilde{\Sigma}_5$};
\node at (3+7, 0-.4) {$2$};
\node at (1.5+7+.4, 2.6) {$\widetilde{\Sigma}_4$};
\node at (1.5+7-.4, 2.6) {$2$};
\node at (0+7, 0-.6){$\phantom{Sigma_6}$};

\node (C1_1) at ({17/2}, 0) {$\bullet$};
\node (C1_2) at ({4.5/2+7}, 1.3) {$\bullet$};
\node (C1_3) at ({1.5/2+7}, 1.3) {$\bullet$};

\node at ({17/2+.25}, 0-.4) {$\widetilde{\Sigma}_2$};
\node at ({17/2-.25}, 0-.4) {$3$};
\node at ({4.5/2+7}, {1.3+.4}) {$3$};
\node at ({4.5/2+7+.4}, {1.3}) {$\widetilde{\Sigma}_1$};
\node at ({1.5/2+7}, {1.3+.4}) {$3$};
\node at ({1.5/2+7-.4}, 1.3) {$\widetilde{\Sigma}_3$};

\draw[thick] (B1_1) to (C1_1);
\draw[thick] (B1_1) to (C1_3);
\draw[thick] (B1_2) to (C1_2);
\draw[thick] (B1_2) to (C1_1);
\draw[thick] (B1_3) to (C1_2);
\draw[thick] (B1_3) to (C1_3);
\draw[thick] (C1_2) to (C1_1);
\draw[thick] (C1_3) to (C1_2);
\draw[thick] (C1_3) to (C1_1);

\end{tikzpicture}
\]

First, suppose that $d = 1$. In this case, $N = n\left(k - \frac{5}{2}\right)$, and each $G_\alpha$ contains $k - 2$ vertices, of which $k - 3$ have weight two and $d=1$ has weight three. Since we are assuming $k \geq 5$, it follows that each $G_\alpha$ contains at least two vertices of weight two. Thus, Lemma~\ref{lem:crude_bound} implies that an embedding of $\Gamma$ into $\mathbb{Z}^{N+1}$ exists only if
\[
N + 1 = n\left(k - \frac{5}{2}\right) + 1 \geq n + n(k - 3),
\]
which is absurd since $n > 2$. Hence, we may assume that $d \geq 2$.

If $k \geq 6$, then each $G_\alpha$ contains $k - 2 \geq 4$ vertices, of which $k - d - 2 \geq 1$ have weight two and $d \geq 2$ have weight three. Thus, by Lemma~\ref{lem:annoying_embedding_lemma}, an embedding of $\Gamma$ into $\mathbb{Z}^{N+1}$ can exist only if
\[
N + 1 \geq n + N,
\]
which is absurd since $n > 1$.

Thus, the remaining case is $k = 5$. In this case, by the assumption that $2 \leq d \leq k - 3$, we have $d = 2$, so that each $G_\alpha$ contains $k - 2 = 3$ vertices, of which $k - d - 2 = 1$ has weight two and $d = 2$ have weight three. Thus, Lemma~\ref{lem:crude_bound2} shows that $\Gamma$ can admit an embedding into $\mathbb{Z}^{N+1}$ only if
\[
N + 1 = 2n + 1 \geq \frac{7n}{3},
\]
which is absurd since $n = 23$ in this case.

Therefore, having eliminated all possibilities when $d \leq k - 3$, we conclude that a smooth $\mathbb{C}$-realization can occur only when $d \geq k - 2$, from which it follows that
\[
n = (k^2-k+1) + d \geq k^2 - 1,
\]
as required.
\end{proof}

\section{Proof of $n \neq k^2-1$}\label{sec:nneqk^2-1}

Let $\mathcal A=(\mathcal L,\mathcal P)$ be an $(n_k)$-configuration with distinguished set $\mathcal K\subset\mathcal P$. As before, write
\[
n=(k^2-k+1)+d.
\]
In this section, we consider the case
\[
n=k^2-1,
\qquad\text{equivalently}\qquad
d=k-2.
\]

\begin{defn}\label{def:residual}
Let $\mathcal A=(\mathcal L,\mathcal P)$ be an $(n_k)$-configuration with distinguished set $\mathcal K\subset\mathcal P$. We call a point $q\in\mathcal P\smallsetminus\mathcal K$ a \emph{residual point}. If $L,L'\in\mathcal L$ are distinct lines, let $P(L,L')$ denote the unique point of $\mathcal P$ that lies on both $L$ and $L'$. We call the two-element set $e=\{L,L'\}$ a \emph{residual pair} if $P(L,L')$ is a residual point.
\end{defn}

Let $\mathcal E$ be the set of residual pairs. Fix a line $L\in\mathcal L$. The $k$ points of $\mathcal K$ that lie on $L$ account for $k(k-1)$ lines other than $L$: for each such point there are $k-1$ other lines containing it, and no other line is counted twice, since two distinct lines have a unique common point. Hence exactly
\[
n-1-k(k-1)=d
\]
lines meet $L$ at residual points. Equivalently, $L$ belongs to exactly $d$ residual pairs. Double counting residual pairs by their two lines gives
\[
N:=|\mathcal E|=\frac{nd}{2}=\frac{n(k-2)}2.
\]
This agrees, in the borderline case $d=k-2$, with the number of positive blow-ups in the construction of Theorem~\ref{thm:k2minus1-bound}.

Now suppose, for contradiction, that $\mathcal A$ admits a smooth $\C$-realization. For each line $L\in\mathcal L$, let $S_L$ be the corresponding sphere, and let $H$ denote the pullback of the homology class of a complex projective line in $\CP^2$. 
Blow up negatively at each point $p\in\mathcal K$, and write $E_p$ for the corresponding negative exceptional class. After a small isotopy, for every residual pair $e=\{L,L'\}$ the proper transforms of $S_L$ and $S_{L'}$ meet in a transverse double point near $P(L,L')$. Blow up positively at all these double points, and write $F_e$ for the corresponding positive exceptional class. Since $d=k-2$, these are the only positive blow-ups needed.

Fix a choice of ordering of every residual pair $e=\{L,L'\}$ and define
\[
\epsilon_{L,e}=
\begin{cases}
  1, &\text{if }L\text{ is the first line in }e,\\
 -1, &\text{if }L\text{ is the second line in }e,\\
  0, &\text{if }L\notin e.
\end{cases}
\]
Thus the two nonzero signs in a column indexed by $e$ are opposite. In the ambient diagonal lattice with basis $H$, the $F_e$, and the $E_p$, the final proper transform of $S_L$ has class
\[
[S_L'']
=
H+\sum_{e\in\mathcal E}\epsilon_{L,e}F_e
-\sum_{\substack{p\in\mathcal K\\ p\text{ lies on }L}}E_p.
\]
These classes have square $1+d-k=-1$. Geometrically, the spheres $S_L''$ are pairwise disjoint in
\[
X:=(1+N)\CP^2\# n\,\overline{\CP}^2.
\]
Set
\[
L_{\mathcal A}
=
\left\langle [S_L'']\mid L\in\mathcal L\right\rangle^\perp
\subset \left(H_2(X;\Z)/ \tors ,Q_{X}\right).
\]
As in the proof of Theorem~\ref{thm:k2minus1-bound}, blowing down the pairwise disjoint $(-1)$-spheres $S_L''$ gives a smooth closed positive-definite 4-manifold whose intersection lattice is $L_{\mathcal A}$. Hence Donaldson's diagonalization theorem gives
\[
L_{\mathcal A}\cong \Z^{N+1}.
\]

Moreover, note that the vector
\[
u=kH-\sum_{p\in\mathcal K}E_p
\]
lies in $L_{\mathcal A}$. Indeed, for each line $L\in\mathcal L$,
\[
u\cdot [S_L'']
= k-\#\{p\in\mathcal K\mid p\text{ lies on }L\}
=0,
\]
because $L$ contains exactly $k$ points of $\mathcal K$. Moreover,
\[
u^2=k^2-n=k^2-(k^2-1)=1.
\]
Set
\[
M:=u^\perp\cap L_{\mathcal A}.
\]
Since $L_{\mathcal A}\cong\Z^{N+1}$ and $u$ is a unit vector, its
orthogonal complement is standard; hence
\[
M\cong\Z^N.
\]

First, we observe that for every vector in $M$, the coefficient of $H$ vanishes.

\begin{lem}\label{lem:balanced-equations}
Let $v\in M\cong\Z^N$, and write
\[
v=aH+\sum_{e\in\mathcal E} b_eF_e+\sum_{p\in\mathcal K}c_pE_p.
\]
Then
\[
a=0,\qquad \sum_{p\in\mathcal K}c_p=0,
\]
and, for every line $L\in\mathcal L$,
\begin{equation}\label{eq:line-balance}
\sum_{e\in\mathcal E}\epsilon_{L,e}b_e
+\sum_{\substack{p\in\mathcal K\\ p\text{ lies on }L}}c_p=0.
\end{equation}
\end{lem}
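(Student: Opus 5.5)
The plan is to write out the two defining orthogonality conditions for $v$, namely $v\cdot u=0$ and $v\cdot[S_L'']=0$ for all $L\in\mathcal L$, in the diagonal basis, and then combine them by summing over all lines. In $H_2(X;\Z)/\tors$ we have $H^2=1$, $F_e^2=1$ and $E_p^2=-1$, and distinct basis elements are orthogonal.

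First, $v\in M$ means $v\cdot u=0$. Since $u=kH-\sum_{p\in\mathcal K}E_p$, this gives
\[
ka+\sum_{p\in\mathcal K}c_p=0. \qquad (\ast)
\]
Second, $v\in L_{\mathcal A}$ means $v\cdot[S_L'']=0$ for each line $L$. Using the formula for $[S_L'']$, this becomes
\[
a+\sum_{e\in\mathcal E}\epsilon_{L,e}b_e+\sum_{\substack{p\in\mathcal K\\ p\text{ lies on }L}}c_p=0 . \qquad (\ast\ast)
\]
The sign works out because $c_pE_p\cdot(-E_p)=c_p$.

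Next, I sum $(\ast\ast)$ over all $n$ lines $L\in\mathcal L$.
\begin{itemize}
\item For each fixed $e=\{L,L'\}$, the coefficient of $b_e$ is $\sum_L\epsilon_{L,e}=1-1=0$, because the two nonzero signs in each column are opposite.
\item Each $p\in\mathcal K$ lies on exactly $k$ lines, so each $c_p$ appears exactly $k$ times.
\end{itemize}
Hence the sum is
\[
na+k\sum_{p\in\mathcal K}c_p=0.
\]
Substituting $(\ast)$, i.e.\ $\sum_p c_p=-ka$, gives $na-k^2a=0$, that is $(n-k^2)a=0$. Since $n=k^2-1$, this reads $-a=0$, so $a=0$.

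Feeding $a=0$ back into $(\ast)$ gives $\sum_p c_p=0$. Feeding $a=0$ into $(\ast\ast)$ gives exactly the line-balance equation~\eqref{eq:line-balance}.

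There is no real obstacle here. The only points needing care are the sign conventions: the negative square of the $E_p$, and the cancellation of the $\epsilon_{L,e}$ when summing over lines. The essential input is that $n-k^2=-1\neq 0$, which is precisely where the hypothesis $n=k^2-1$ enters.
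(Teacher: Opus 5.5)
Your proposal is correct and matches the paper's proof: you take the $n$ line equations $v\cdot[S_L'']=0$, sum them so the $\epsilon_{L,e}$ terms cancel and each $c_p$ is counted $k$ times, and compare the result $na+k\sum_p c_p=0$ with $ka+\sum_p c_p=0$ (from $v\perp u$). Then $n=k^2-1$ forces $a=0$, which gives $\sum_p c_p=0$ and the line-balance equations.
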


\begin{proof}
For each line $L\in\mathcal L$, the equation $v\cdot [S_L'']=0$ gives
\[
a+\sum_{e\in\mathcal E}\epsilon_{L,e}b_e
+\sum_{\substack{p\in\mathcal K\\ p\text{ lies on }L}}c_p=0
\]
for every line $L\in\mathcal L$. Summing these equations over all lines in $\mathcal L$ cancels the terms involving the $b_e$, since every residual pair contributes once with sign $1$ and once with sign $-1$. Since every point of $\mathcal K$ lies on exactly $k$ lines, this gives
\[
na+k\sum_{p\in\mathcal K}c_p=0.
\]
On the other hand, $v\in u^\perp$, so
\[
ka+\sum_{p\in\mathcal K}c_p=0.
\]
Comparing these two equations and using $n=k^2-1$, we have $a=0$, and then $\sum_p c_p=0$. Substituting $a=0$ in the orthogonality equations for each line $L\in\mathcal L$ gives \eqref{eq:line-balance}.
\end{proof}

Therefore, from now on, we can write each vector in $M\cong\Z^N$ as
\[
v=\sum_{e\in\mathcal E} b_eF_e+\sum_{p\in\mathcal K}c_pE_p.
\]
For a residual pair $e=\{L,L'\}$, define
\[
f_e:=F_e+\epsilon_{L,e}[S_L'']
        +\epsilon_{L',e}[S_{L'}''].
\]
The vectors $f_e$, for $e\in\mathcal E$, are the $N$ square-three vectors in
the orthogonal complement mentioned in the introduction.

\begin{lem}\label{lem:residual-vectors}
For every residual pair $e\in\mathcal E$, we have $f_e\in M\cong\Z^N$ and $f_e^2=3$. Moreover, for any vector
\[
v=\sum_{e\in\mathcal E} b_eF_e+\sum_{p\in\mathcal K}c_pE_p\in M,
\]
we have
\[
f_e\cdot v=b_e.
\]
\end{lem}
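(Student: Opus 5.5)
We verify the three assertions by direct computation in the ambient diagonal lattice with basis $H$, the $F_e$, and the $E_p$. Here $H^2=1$, $F_e^2=1$, $E_p^2=-1$, and distinct basis vectors are orthogonal.

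\emph{Membership in $L_{\mathcal A}$.} Fix a residual pair $e=\{L,L'\}$. For any line $L''$ we compute $f_e\cdot[S_{L''}'']$. First, $F_e\cdot[S_{L''}'']=\epsilon_{L'',e}$. Second, the classes $[S_M'']$ are pairwise orthogonal with square $-1$, since the spheres are disjoint; this can also be checked algebraically. Indeed, for $M\neq M'$ we get $1+\sum_e\epsilon_{M,e}\epsilon_{M',e}-\#\{p\in\mathcal K \text{ on both}\}$. If $\{M,M'\}$ is a residual pair, this equals $1-1-0=0$; otherwise it equals $1+0-1=0$.

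It follows that $\epsilon_{L,e}[S_L'']\cdot[S_{L''}'']$ equals $-\epsilon_{L,e}$ if $L''=L$ and $0$ otherwise, and similarly for $L'$. Summing the contributions, $f_e\cdot[S_{L''}'']=\epsilon_{L'',e}-\epsilon_{L'',e}=0$ for $L''\in\{L,L'\}$, and it equals $0+0+0$ otherwise. Hence $f_e\in L_{\mathcal A}$.

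\emph{Orthogonality to $u$.} Since $u\in L_{\mathcal A}$ is orthogonal to every $[S_M'']$, and $F_e\cdot u=0$ because $u$ involves only $H$ and the $E_p$, we get $f_e\cdot u=0$. Thus $f_e\in M$.

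\emph{The square.} Using $\epsilon_{L,e}=-\epsilon_{L',e}=\pm1$,
\[
f_e^2=F_e^2+2\epsilon_{L,e}F_e\cdot[S_L'']+2\epsilon_{L',e}F_e\cdot[S_{L'}'']+[S_L'']^2+[S_{L'}'']^2.
\]
The two middle terms each contribute $2\epsilon^2=2$, the two last terms each contribute $-1$, and the cross term between $[S_L'']$ and $[S_{L'}'']$ vanishes by orthogonality. So $f_e^2=1+4-2=3$.

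\emph{The pairing formula.} For $v\in M$ we have $v\perp[S_M'']$ for all $M$, because $M\subset L_{\mathcal A}$. Therefore $f_e\cdot v=F_e\cdot v=b_e$, using Lemma~\ref{lem:balanced-equations} only for the form of $v$.

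No step is a real obstacle. The only point needing care is the sign bookkeeping in $F_e\cdot[S_{L''}'']=\epsilon_{L'',e}$ together with the orthogonality of the $[S_M'']$.
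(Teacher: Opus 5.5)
Your proposal is correct and is essentially the paper's proof: a direct computation of $f_e\cdot[S_J'']$, $f_e\cdot u$, $f_e^2$ and $f_e\cdot v$ in the diagonal basis $H$, $F_e$, $E_p$. Your only addition is an algebraic check that the classes $[S_L'']$ are pairwise orthogonal, which the paper takes directly from the geometric disjointness of the spheres. One cosmetic point: you use $M$ and $L''$ as names for lines, which clashes with the lattice $M=u^\perp\cap L_{\mathcal A}$ and with the double-prime notation for proper transforms (see ``$v\perp[S_M'']$ for all $M$, because $M\subset L_{\mathcal A}$''), so rename those lines.
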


\begin{proof}
Since $F_e\cdot [S_L'']=\epsilon_{L,e}$ and $[S_L'']^2=-1$, we have
\[
f_e\cdot [S_L'']=0.
\]
The same calculation applies to $S_{L'}''$. If $J\in\mathcal L\smallsetminus\{L,L'\}$, then $F_e$ does not appear in $[S_J'']$, so the pairwise orthogonality of the classes $[S_I'']$ gives $f_e\cdot[S_J'']=0$. Hence $f_e\in L_{\mathcal A}$. Moreover, since $u\cdot f_e=0$, we have $f_e\in M$.

Using $\epsilon_{L,e}^2=\epsilon_{L',e}^2=1$, we get
\[
f_e^2=1+2+2-1-1=3.
\]
Finally, if $v\in M$, then $v$ is orthogonal to $S_L''$ and $S_{L'}''$, and hence
\[
f_e\cdot v=F_e\cdot v=b_e,
\]
which completes the proof.
\end{proof}




Now we are ready to prove the desired inequality for $k\geq5$.

\begin{lem}\label{lem:kge5-borderline}
If there exists a smooth $\C$-realization of an $(n_k)$-configuration with $k\geq5$, then
\[
n\geq k^2.
\]
\end{lem}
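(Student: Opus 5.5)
By Theorem~\ref{thm:k2minus1-bound}, it suffices to rule out the borderline case $n=k^2-1$, i.e.\ $d=k-2\geq 3$. So I assume a smooth $\C$-realization exists and work in $M\cong\Z^N$ with the vectors $f_e$ of Lemma~\ref{lem:residual-vectors}. The plan is to show that the $f_e$ form a nice-type graph, pin down their supports in a standard basis $e_1,\dots,e_N$ of $M$, and then derive a contradiction from Lemma~\ref{lem:balanced-equations} applied to the basis vectors themselves.

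\textbf{Step 1: the graph of the $f_e$.} First I compute the pairings. If $e\neq e'$ share a line $L$, then $f_e\cdot f_{e'}=\epsilon_{L,e}\epsilon_{L,e'}=\pm1$; if they are disjoint, the pairing is $0$. So the $f_e$ embed the line graph of the $d$-regular \emph{residual graph} (vertices $\mathcal L$, edges $\mathcal E$). Every vertex has weight three, each $f_e$ lies in exactly two cliques $C_L$ (one for each line of $e$), and each clique has $d=k-2\geq 3$ vertices. Since $f_e^2=3$, each $f_e$ has exactly three coordinates equal to $\pm1$.

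\textbf{Step 2: a common coordinate for each clique.} I would adapt the arguments of Lemmas~\ref{lem:vertex_pairs} and~\ref{lem:general_clique}, via parity and support counting, to show that each clique $C_L$ has a common coordinate $e_L$, and that these coordinates are distinct for distinct $L$. The configuration to exclude is three weight-three vectors that pairwise share one coordinate but have no common one; this should be ruled out using a fourth vector of the clique when $d\geq4$, or a neighbour from an adjacent clique. I expect this to be the \emph{main obstacle}. The small case $k=5$, $d=3$ is the most delicate, and I would handle it first by direct analysis of a triangle of cliques.

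Granting Step 2, each $f_e$ with $e=\{L,L'\}$ has support $\{e_L,e_{L'},g_e\}$ for one extra coordinate $g_e$ outside the $n$ coordinates $e_L$. The $e_L$ carry $nd=2N$ of the $3N$ incidences, so the remaining $N-n$ basis vectors carry exactly $N$ incidences.

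\textbf{Step 3: contradiction from the basis vectors.} Every basis vector $x=e_i\in M$ can be written $x=\sum b_eF_e+\sum c_pE_p$, where $b_e=f_e\cdot x\in\{0,\pm1\}$ by Lemma~\ref{lem:residual-vectors}. Comparing squares gives
\[
1=\#\{e: b_e\neq0\}-\sum_p c_p^2,
\]
and Lemma~\ref{lem:balanced-equations} gives $\sum_p c_p=0$ together with the line-balance equations~\eqref{eq:line-balance}.

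As a consistency check, projecting $E_p$ to $M$ gives $\sum_i (E_p\cdot e_i)^2=k-2$.

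Now consider the extra coordinates $g_e$.

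A basis vector with $b$-support of size $1$, say $b_e\neq0$ only, has $c=0$. Then the balance equation at a line of $e$ gives $\epsilon b_e=0$, which is a contradiction.

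Support of size $0$ would force $\sum_p c_p^2=-1$, which is impossible.

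Hence each of the $N-n$ extra basis vectors has $b$-support of size at least $2$. Since together they carry exactly $N$ incidences, this gives $2(N-n)\leq N$. Next, I would show the extra coordinates must be distinct: two $f_e$ sharing $g_e$ would create an unwanted pairing or a forbidden adjacency. That yields $N-n\geq N$, which is absurd.

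If distinctness fails, I would fall back on the balance equations at the coordinates $e_L$. There $b$ has support exactly the $d$ pairs containing $L$, so $\sum_p c_p^2=k-3$ with the $c_p$ supported near points of $L$. I would sum these constraints over $p\in\mathcal K$ against the total $\sum_i\sum_p c_p(e_i)^2=n(k-2)$, aiming for a count that exceeds this total.
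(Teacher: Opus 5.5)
Your proposal has a genuine gap at Step~2. You flag it yourself as the main obstacle and never prove it, yet everything after it depends on it: the description $\supp(f_e)=\{e_L,e_{L'},g_e\}$, the claim that the $N-n$ extra coordinates carry exactly $N$ incidences, and the distinctness of the $g_e$.

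The methods of Lemmas~\ref{lem:vertex_pairs} and~\ref{lem:general_clique} do not transfer directly. Their parity arguments repeatedly use a weight-two vertex, for example the vertex $z$ chosen in $G_\beta$. Here every $f_e$ has weight three, so adjacent $f_e$'s may share one or three coordinates, and non-adjacent ones zero or two. A priori, a clique $C_L$ of size $d=3$ could be realized with pairwise single overlaps and no common coordinate. For $k=5$, the line graph of the cubic residual graph can also contain $3$-cliques coming from triangles in the residual graph, so it is unclear that the rigidity you want holds at all.

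Without Step~2, your inequality $2(N-n)\le N$ is not available. Even granting it, it only gives a contradiction for $k\ge 7$. The cases $k=5,6$ rest on the unproved distinctness and on a fallback that is only sketched.

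The missing idea is that no structural information about the supports is needed. Your Step~3 computation already finishes the proof once you apply it to a well-chosen basis vector. Since $\sum_i\#\{e\mid i\in\supp(f_e)\}=\sum_e|\supp(f_e)|=3N$ over the $N$ basis vectors, some $e_i$ lies in the support of at most three $f_e$. Its coefficients then satisfy $\sum_p c_p^2=\#\{e\mid b_e\neq0\}-1\le 2$ and $\sum_p c_p=0$. There are two possibilities.

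\textbf{Case $c=0$.} This forces exactly one $b_e\neq 0$, which is excluded by~\eqref{eq:line-balance} exactly as you observed.

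\textbf{Case $c\neq 0$.} Then $c_p=\pm1$ at exactly two points $p,q\in\mathcal K$, and exactly three $b_e$ are nonzero. The $c$-term of~\eqref{eq:line-balance} is nonzero on every line through exactly one of $p,q$, which is at least $2k-2\ge 8$ lines. But three residual pairs involve at most six lines, a contradiction.

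This averaging-plus-local-count argument is precisely the paper's proof, and it avoids any rigidity statement for the embedding of the $f_e$.
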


\begin{proof}
By Theorem~\ref{thm:k2minus1-bound}, it is enough to rule out the borderline case $n=k^2-1$. Assume, for contradiction, that a smooth realization exists in this borderline case. By Donaldson's diagonalization theorem, as observed above, we have $M\cong\Z^N$ with orthonormal basis $\{e_1,\dots,e_N\}$.

We first choose an index $i$ such that
\[
\#\{e\in\mathcal E\mid i\in\supp(f_e)\}\leq3.
\]
Such an index exists by an elementary argument. Indeed, since $f_e^2=3$, we have $|\supp(f_e)|=3$ for every residual pair $e\in\mathcal E$. Hence
\[
\sum_{i=1}^N
\#\{e\in\mathcal E\mid i\in\supp(f_e)\}
=
\sum_{e\in\mathcal E}|\supp(f_e)|
=3N.
\]
Therefore some index $i$ lies in the support of at most three of the vectors $f_e$.

Set $v=e_i$, and write
\[
v=\sum_{e\in\mathcal E} b_eF_e+\sum_{p\in\mathcal K}c_pE_p
\]
using Lemma~\ref{lem:balanced-equations}. In this setting \eqref{eq:line-balance} gives
\begin{equation}\label{eq:line-balance2}
\sum_{e\in\mathcal E}\epsilon_{L,e}b_e
+\sum_{\substack{p\in\mathcal K\\ p\text{ lies on }L}}c_p=0,
\end{equation}
for every $L\in\mathcal L$.
By Lemma~\ref{lem:residual-vectors}, for every residual pair $e\in\mathcal E$ we have
\[
b_e=f_e\cdot v.
\]
Since $v=e_i$, this is the $i$-th coordinate of $f_e$. Moreover, since $f_e^2=3$ in the standard lattice $M\cong\Z^N$, every coordinate of $f_e$ is $0$ or $\pm1$. Thus $b_e\in\{0,\pm1\}$. By the choice of $i$, the coefficient $b_e$ is nonzero for at most three residual pairs. Since $v^2=1$,
\[
1=\sum_{e\in\mathcal E} b_e^2-\sum_{p\in\mathcal K}c_p^2.
\]
It follows that
\[
\sum_{p\in\mathcal K}c_p^2\leq2.
\]
Together with Lemma~\ref{lem:balanced-equations}, which gives $\sum_{p\in\mathcal K}c_p=0$, this implies that either all the $c_p$ are zero, or the only nonzero coefficients among the $c_p$ are $1$ and $-1$, occurring at two distinct points of $\mathcal K$.

If all the $c_p$ are zero, then $\sum_e b_e^2=1$, so exactly one coefficient $b_e$ is nonzero. For either line in the residual pair $e$, equation~\eqref{eq:line-balance2} has a nonzero residual term and no $c_p$-term, a contradiction.

Thus there are distinct points $p,q\in\mathcal K$ such that the only nonzero coefficients among the $c_p$ occur at $p$ and $q$. Then $\sum_e b_e^2=3$, so the nonzero residual coefficients can appear in at most six line equations. On the other hand, the $c_p$-term in \eqref{eq:line-balance2} is nonzero on all lines that contain exactly one of $p$ and $q$. If no line contains both $p$ and $q$, there are $2k$ such lines. If some line contains both $p$ and $q$, then this line is unique, and the two contributions cancel on it; hence there are $2k-2$ such lines. In either case, for $k\geq5$ there are at least eight lines for which the $c_p$-term in \eqref{eq:line-balance2} is non-zero. However, the $b_e$ terms in \eqref{eq:line-balance2} can be non-zero for at most six lines. Thus, we obtain a contradiction for $k\geq 5$.
\end{proof}

For the rest of the section, we treat the case of $(15_4)$-configurations;
this final case will complete the proof of Theorem~\ref{thm:main}.

\begin{lem}\label{lem:15-4-matching}
In a $(15_4)$-configuration, let $p,q\in\mathcal K$ be distinct points, and suppose that a line $L\in\mathcal L$ contains both $p$ and $q$. Let $A_1,A_2,A_3$ be the other three lines containing $p$, and let $B_1,B_2,B_3$ be the other three lines containing $q$. Then there is no permutation $\sigma$ of $\{1,2,3\}$ such that each pair $\{A_i,B_{\sigma(i)}\}$ is a residual pair.
\end{lem}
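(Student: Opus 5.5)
The plan is a direct double count of points of $\mathcal K$ lying off $L$. First I set up the local picture around $L$. Since $n=15$ and $k=4$, we have $d=n-1-k(k-1)=2$, so every line lies in exactly two residual pairs. Write the four points of $\mathcal K$ on $L$ as $p,q,r,s$. Let $C_1,C_2,C_3$ be the other lines through $r$, let $D_1,D_2,D_3$ be the other lines through $s$, and let $R_1,R_2$ be the two lines meeting $L$ at residual points. Then $\mathcal L=\{L\}\cup\{A_i\}\cup\{B_j\}\cup\{C_a\}\cup\{D_b\}\cup\{R_1,R_2\}$, which accounts for all $1+3+3+3+3+2=15$ lines. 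Two lines in the same group among the $A$'s, $B$'s, $C$'s or $D$'s meet at a point of $L$. Hence a point of $\mathcal K$ not on $L$ contains at most one line from each of these four groups.

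Next I will show that every point of $\mathcal K$ on $A_i$ other than $p$ contains one of the $C$'s, and likewise for every point of $\mathcal K$ on $B_j$ other than $q$. Fix $A_i$. The lines through $p$ other than $A_i$ are $L$ and the other two $A$'s. The remaining $11$ lines are the three $B$'s, three $C$'s, three $D$'s and $R_1,R_2$. Exactly two of them meet $A_i$ at residual points. The other nine are partitioned into three triples, namely the other lines through each of the three points of $\mathcal K$ on $A_i$ besides $p$. By the remark above, each triple contains at most one $C$. The three $C$'s meet $A_i$ somewhere, and by hypothesis one residual partner of $A_i$ is $B_{\sigma(i)}$. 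I want both residual partners to be non-$C$'s, so that the $C$'s must be distributed one per triple.

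I expect this to be the main obstacle: excluding the possibility that the second residual partner of $A_i$ is a $C$. The case to worry about is a residual partner $C_a$ together with a triple containing no $C$, which would then have to be drawn from the $D$'s, the two remaining $B$'s and $R_1,R_2$. I would first try to rule this out by the same counting applied to the $D$'s. The nine non-residual lines consist of at most $3$ $C$'s, at most $3$ $D$'s and at most one $B$ per triple. A cleaner fallback is to skip this step and run the final count with $C\cup D$ together: every triple contains at least one line from $C\cup D$, since at most one of the three can be a $B$ and $R_1,R_2$ can supply at most two more. I then bound incidences between points off $L$ and $C\cup D$, which number $18$ with at most two lines per point, and combine this with the count below. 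I will adjust the final inequality accordingly if the clean one-$C$-per-triple statement fails.

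Finally comes the count. The points of $\mathcal K\setminus\{p\}$ on the $A$'s give $9$ incidences, and those of $\mathcal K\setminus\{q\}$ on the $B$'s give $9$ more. A point off $L$ can carry at most one $A$ and one $B$. A point carrying both $A_i$ and $B_j$ is exactly their intersection in the case where $\{A_i,B_j\}$ is not residual. The matching $\sigma$ makes at least three of the nine pairs residual, so at most $6$ points are shared. Therefore at least $18-6=12$ distinct points of $\mathcal K$ off $L$ lie on some $A$ or $B$, and by the previous step each of them contains a $C$. However, each $C$ contains only $3$ points of $\mathcal K$ off $L$, and no point carries two $C$'s, so at most $9$ such points exist. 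Since $12>9$, this is the contradiction, and no such permutation $\sigma$ exists.
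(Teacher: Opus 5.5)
Your final count is right, but the step you route it through is not established. The claim is that every point of $\mathcal K$ on some $A_i$ (other than $p$) or on some $B_j$ (other than $q$) lies on one of the $C_a$. This needs the second residual partner of each $A_i$ and each $B_j$ to avoid the $C$'s, and nothing in the hypotheses gives you that; you flag this yourself. Your fallback does not repair it. The three other lines through a point of $\mathcal K$ on $A_i$ could a priori be $\{B_j,R_1,R_2\}$, if $R_1,R_2$ meet $L$ at different residual points and meet each other at a point of $\mathcal K$. So ``at most one $B$ plus at most two $R$'s'' does not force a line of $C\cup D$. In any case, counting incidences with $C\cup D$ gives no upper bound on the number of points better than the trivial one.

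That trivial bound is exactly what you overlooked, and it makes the whole $C$-step unnecessary. The $12$ distinct points you produce all lie in $\mathcal K\setminus\{p,q,r,s\}$, which has only $15-4=11$ elements. The contradiction is therefore already there before the $C$'s enter, and your second step can simply be deleted. With that change your argument is essentially the paper's. The paper observes that the lines through $p$ miss exactly two points of $\mathcal K\setminus\{p\}$, and likewise for $q$. Hence at least $9$ of the $13$ points of $\mathcal K\setminus\{p,q\}$ lie on a line through $p$ and on a line through $q$, and at least $7$ of these are off $L$. So at least $7$ of the nine pairs $\{A_i,B_j\}$ meet in $\mathcal K$, and at most two are residual. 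This is the same inclusion--exclusion as your bound $9+9-(\text{number of shared points})\le 11$. Your union phrasing is, if anything, slightly more direct, since it avoids counting the points missed by the lines through $p$ and $q$.
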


\begin{proof}
Fix $p$. The four lines containing $p$ contain $4(4-1)=12$ points of $\mathcal K$ distinct from $p$. These points are all distinct, because two distinct lines have a unique common point. Since $|\mathcal K|=15$, exactly two points of $\mathcal K\smallsetminus\{p\}$ do not lie on any line containing $p$. The same statement holds with $q$ in place of $p$.

Among the thirteen points of $\mathcal K\smallsetminus\{p,q\}$, at most four fail to lie on a line containing $p$ or fail to lie on a line containing $q$. Thus at least nine of these thirteen points lie both on a line containing $p$ and on a line containing $q$. Two of them are the other points of $\mathcal K$ lying on $L$, so at least seven points of $\mathcal K$ not lying on $L$ have this property.

Each such point $r$ not lying on $L$ lies on a unique line among
$A_1,A_2,A_3$ and on a unique line among $B_1,B_2,B_3$. Hence $r$ is the
common point of a unique pair $\{A_i,B_j\}$, and distinct points give
distinct pairs. There are nine possible pairs $\{A_i,B_j\}$ in total, and
at least seven of them correspond to points in $\mathcal K$. Thus at most
two of the pairs $\{A_i,B_j\}$ are residual pairs. A permutation $\sigma$
for which all three pairs $\{A_i,B_{\sigma(i)}\}$ are residual pairs would
require three residual pairs, which is impossible.
\end{proof}

\begin{lem}\label{lem:15-4-borderline}
The borderline case $(15_4)$ is not smoothly $\C$-realizable.
\end{lem}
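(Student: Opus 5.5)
We follow the argument of Lemma~\ref{lem:kge5-borderline}, now with $k=4$, $n=15$, $d=2$ and $N=15$, and sharpen its final step using Lemma~\ref{lem:15-4-matching}.

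Assume a smooth realization exists. Then $M\cong\Z^{15}$, and we have the fifteen vectors $f_e$ of square three. As before, we choose an index $i$ lying in the support of at most three of the $f_e$. We set $v=e_i$ and write $v=\sum_e b_eF_e+\sum_p c_pE_p$. Exactly as in that proof, each $b_e$ lies in $\{0,\pm1\}$, at most three $b_e$ are nonzero, and $\sum_p c_p=0$. The case where all $c_p$ vanish is impossible.

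So there are distinct $p,q\in\mathcal K$ with $c_p=1$ and $c_q=-1$, and exactly three residual pairs $e_1,e_2,e_3$ with $b_{e_j}=\pm1$. These three pairs involve at most six lines. The $c$-term in \eqref{eq:line-balance2} is nonzero on every line containing exactly one of $p,q$. If no line contains both $p$ and $q$, there are $2k=8>6$ such lines, a contradiction. Hence some line $L$ contains both $p$ and $q$, and the set of lines with nonzero $c$-term is exactly $\{A_1,A_2,A_3,B_1,B_2,B_3\}$, in the notation of Lemma~\ref{lem:15-4-matching}.

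Every line whose equation contains a nonzero $b_e$ must have a nonzero $c$-term, and conversely. Otherwise, on a line with zero $c$-term, a single nonzero $b_e$ cannot cancel. We must also rule out two nonzero $b_e$'s cancelling on such a line, since that would free up lines. Counting handles this: the six lines $A_i,B_j$ each need at least one nonzero $b$-term, and three pairs give only six incidences. So the three residual pairs cover the six lines exactly once each, and each of these lines meets exactly one $e_j$.

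On $A_i$ the equation reads $\pm b_{e}+1=0$, and on $B_j$ it reads $\pm b_{e}-1=0$. Each pair $e_j$ therefore consists of two of these six lines, each appearing once. It remains to see that each pair must join some $A_i$ to some $B_{j}$. Suppose instead that $e=\{A_i,A_{i'}\}$. Then the signs $\epsilon$ are opposite on its two lines, so the contributions $\epsilon_{A_i,e}b_e$ and $\epsilon_{A_{i'},e}b_e$ are opposite. But both $A$-equations require this contribution to equal $-1$, a contradiction. Moreover, two $A$ lines meet at $p\in\mathcal K$, so $\{A_i,A_{i'}\}$ is not even a residual pair. Either way, every pair $e_j$ has the form $\{A_i,B_{\sigma(i)}\}$ for a permutation $\sigma$, which contradicts Lemma~\ref{lem:15-4-matching}.

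The main obstacle is the bookkeeping in the middle step. One must be sure that exactly three $b_e$ are nonzero, using $\sum_e b_e^2=1+\sum_p c_p^2=3$. One must also check that no line outside the six can carry cancelling $b$-terms. The incidence count (six incidences against six required lines) rules this out, because any extra line would take incidences away from the six. After that, the structure is forced and Lemma~\ref{lem:15-4-matching} finishes the proof.
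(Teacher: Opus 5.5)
Your proof is correct and takes essentially the same route as the paper. You rerun the argument of Lemma~\ref{lem:kge5-borderline} with $k=4$, use the $8>6$ count to force a line through both $p$ and $q$, show that the three nonzero $b_e$ match the six lines $A_i,B_j$ perfectly, and contradict Lemma~\ref{lem:15-4-matching}. Your explicit count (six lines each needing an incidence, against six available incidences) spells out a step the paper states briefly, and your sign argument excluding pairs $\{A_i,A_{i'}\}$ is a valid supplement to the paper's observation that such lines already meet at $p\in\mathcal K$.
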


\begin{proof}
Assume a smooth $\C$-realization of a $(15_4)$-configuration exists. The borderline setup applies with $k=4$, $n=15$, and $N=15$. By Donaldson’s theorem and the observation following the definition of $M$, we have $M\cong\Z^{15}$. Choose an index $i$ as in the proof of Lemma~\ref{lem:kge5-borderline}, and set $v=e_i$.

The proof of Lemma~\ref{lem:kge5-borderline} applies up to the final count. Thus not all the $c_p$ are zero, and hence there are distinct points $p,q\in\mathcal K$ such that the only nonzero coefficients among the $c_p$ occur at $p$ and $q$. If no line contains both $p$ and $q$, then the $c_p$-term in \eqref{eq:line-balance} is nonzero in eight line equations, while the nonzero residual coefficients can appear in at most six line equations. This is impossible, so some line $L\in\mathcal L$ contains both $p$ and $q$.

Let $A_1,A_2,A_3$ be the other three lines containing $p$, and let $B_1,B_2,B_3$ be the other three lines containing $q$. The $c_p$-term in \eqref{eq:line-balance} is nonzero precisely on these six lines. Since $\sum_e b_e^2=3$ and each $b_e\in\{0,\pm1\}$, exactly three residual coefficients are nonzero. Equation~\eqref{eq:line-balance} forces the corresponding three residual
pairs with $b_e\ne0$ to involve exactly the six lines
\[
A_1,A_2,A_3,B_1,B_2,B_3,
\]
each appearing once. No residual pair can consist of two of the $A_i$, since
any two of them meet at the configuration point $p$. Similarly, no residual
pair can consist of two of the $B_j$. Hence the three residual pairs with
$b_e\ne0$ determine a permutation $\sigma$ such that every pair
$\{A_i,B_{\sigma(i)}\}$ is a residual pair. This contradicts
Lemma~\ref{lem:15-4-matching}.
\end{proof}

Finally, we prove the main theorem.

\begin{proof}[Proof of Theorem~\ref{thm:main}]
Suppose that there exists an $(n_k)$-configuration with $k\geq 4$ admitting a smooth $\C$-realization. By Theorem~\ref{thm:k2minus1-bound}, we have $n\geq k^2-1$. It remains to exclude the borderline case $n=k^2-1$. This is ruled out by Lemma~\ref{lem:kge5-borderline} when $k\geq 5$, and by Lemma~\ref{lem:15-4-borderline} when $k=4$. Hence $n\geq k^2$.
\end{proof}

\appendix

\section{A geometric line-arrangement bound}\label{app:geometric-bound}

For completeness, we record the following bound for geometric
$\C$-realizations, which follows from the classical inequalities for complex
line arrangements.

\begin{prop}\label{prop:hirzebruch}
If there exists a geometric $\C$-realization of an $(n_k)$-configuration with $k \geq 4$, then
\[
n\geq
\begin{cases}
16, & k=4,\\[2mm]
\left\lceil \dfrac32 k^2-3k+3\right\rceil, & k\geq 5.
\end{cases}
\]
\end{prop}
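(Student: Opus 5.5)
The plan is to apply the Hirzebruch inequality to the line arrangement formed by the $n$ complex lines. Write $t_r$ for the number of points of multiplicity exactly $r$. For an arrangement of $n$ lines with $t_n=t_{n-1}=0$, Hirzebruch's inequality reads
\[
t_2+\tfrac34 t_3\ \geq\ n+\sum_{r\geq5}(r-4)t_r.
\]
Langer's refinement (equivalently, the orbifold Bogomolov--Miyaoka--Yau bound) can be written as
\[
\sum_{r\geq2} r\,t_r\ \geq\ \frac{n^2}{3}+\sum_{r\geq2}\frac{r^2}{4}\,t_r-\ldots
\]
I will use it in its standard form for $n\geq 6$ with $t_r=0$ for $r>\tfrac{2n}{3}$. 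The first step is to compute the combinatorics of a realization. There are exactly $n$ points of multiplicity $k$, namely those of $\mathcal K$. The remaining pairs of lines meet at residual points. Each line lies in exactly $d=n-1-k(k-1)$ residual pairs, so the residual points absorb $nd/2$ pairs in total, and $\sum_{r}\binom r2 t_r^{\mathrm{res}}=nd/2$.

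\textbf{Case $k\geq5$.} I would plug these counts into Langer's inequality. Langer states that for an arrangement with $t_r=0$ when $r>2n/3$,
\[
\sum_r\Bigl(r-\tfrac{r^2}{4}\Bigr)t_r\ \geq\ n-\ldots
\]
More usefully, I will use the quadratic form $\sum_r t_r\bigl(r-\tfrac{r^2}4\bigr)\ge -\tfrac{n^2}{4}\cdots$, choosing whichever normalization gives a bound $n\ge \tfrac32k^2-3k+3$. The strategy is as follows. The $\mathcal K$-points contribute $n(k-k^2/4)$, which is very negative. The residual points contribute at most what double points give per pair, because $r-\tfrac{r^2}{4}$ divided by $\binom r2$ is maximized at $r=2$, giving $1/1$. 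So the residual contribution is at most $nd/2$. Putting these together gives a linear inequality in $n$ of the form
\[
n\bigl(k-\tfrac{k^2}{4}\bigr)+\tfrac{n}{2}\bigl(n-1-k^2+k\bigr)\ \geq\ c\,n,
\]
where $c$ is the constant in Langer's bound. Dividing by $n$ yields $n\ge \tfrac32k^2-3k+\text{const}$, and I would match the constant to $3$. Taking the ceiling finishes this case. I need $k\le 2n/3$, which is immediate from $n\ge k^2-k+1$.

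\textbf{Case $k=4$.} Here Hirzebruch's inequality is the tool. The $\mathcal K$-points have $r=4$, so they contribute nothing to either side. Triple residual points are weighted $\tfrac34$, and $r\ge5$ residual points appear on the right. Since the residual points of multiplicity $r$ use $\binom r2$ pairs, we have $t_2+3t_3\le nd/2$. For $n=14$ we get $d=1$, and for $n=15$ we get $d=2$ (and $n=13$ is the case $d=0$). In these cases each line has at most $d$ residual neighbours, so residual points have multiplicity at most $d+1\le3$. The inequality becomes
\[
t_2+\tfrac34t_3\ \ge\ n,
\]
while $t_2+3t_3=nd/2$. For $d\le1$ this gives $n/2\ge n$, a contradiction. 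For $d=2$ it forces $t_3=0$, $t_2=n=15$, and equality holds in Hirzebruch's inequality. I then invoke the Barthel--Hirzebruch--H\"ofer classification of the equality case: equality occurs only for the dual Hesse arrangement, which has $9$ lines and $t_3=12$, and is therefore not $(15_4)$. This yields $n\ge16$.

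\textbf{Main obstacle.} The main obstacle is pinning down the exact form and normalization of Langer's inequality so that the constant comes out to exactly $\tfrac32k^2-3k+3$. I would try the form
\[
\sum_r t_r\Bigl(r-\tfrac{r^2}{4}\Bigr)\ \geq\ -\tfrac{n^2}{4}+n
\]
first. A secondary check is the hypothesis on the maximal multiplicity needed to apply it.
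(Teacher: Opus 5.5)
Your overall architecture is the paper's: Langer for $k\ge5$, and Hirzebruch plus a Barthel--Hirzebruch--H\"ofer uniqueness result for $k=4$. Your bookkeeping for $k\ge5$ is also sound. The $\mathcal K$-points contribute exactly $n(k-\frac{k^2}{4})$, and the residual points contribute at most $\sum\binom r2=\frac{nd}{2}$, since $(r-\frac{r^2}{4})/\binom r2\le 1$. This is just a rearrangement of the paper's chain of inequalities.

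The gap is that the one external input is never fixed. ``Choosing whichever normalization gives the bound'' is reverse-engineering, not an argument. Your first guess is also wrong: with right-hand side $-\frac{n^2}{4}+n$, your computation yields only $n\ge k^2-2k+2$, which is weaker than the trivial $n\ge k^2-k+1$. The correct input is Langer's Proposition~11.3.1: if no point has multiplicity $>\frac{2n}{3}$, then $\sum_r r\,t_r\ge\frac{n^2}{3}+n$. Combined with $\sum_r r(r-1)t_r=n(n-1)$, this is exactly
\[
\sum_{r\ge 2}\Bigl(r-\tfrac{r^2}{4}\Bigr)t_r\ \ge\ n
\quad\Longleftrightarrow\quad
t_2+\tfrac34t_3\ \ge\ n+\sum_{r\ge5}\Bigl(\tfrac{r^2}{4}-r\Bigr)t_r .
\]
So $c=1$ in your notation, and your inequality then gives $n\ge\frac32k^2-3k+3$ exactly.

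You must also verify the multiplicity hypothesis at residual points, not just at points of $\mathcal K$. The $\mathcal K$-points on the $m$ lines through a residual point form pairwise disjoint $k$-element sets, so $mk\le n$ and hence $m\le\frac{2n}{3}$. Relatedly, your assertion that the multiplicity-$k$ points are exactly those of $\mathcal K$ is unjustified in general. Residual points are only constrained by $m\le d+1$ and $mk\le n$. Your bookkeeping by point type rather than by multiplicity makes this harmless.

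For $k=4$, your reduction is correct and slightly sharper than the paper's, which uses $\binom n2\ge t_2+t_3+6t_4\ge 7n$. With $d\le 2$, residual points have multiplicity at most $3$. The exact count $t_2+3t_3=\frac{nd}{2}$ then rules out $n=13,14$, and at $n=15$ it forces $t_3=0$ and $t_2=t_4=15$.

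The final step, however, rests on a false statement. Equality in Hirzebruch's inequality is not attained only by the dual Hesse arrangement. The Hesse arrangement ($12$ lines, $t_2=12$, $t_4=9$) and the complete quadrilateral ($t_2=3$, $t_3=4$) attain it too. Moreover, any arrangement with only double and quadruple points in which every line carries exactly two double points has $t_2=n$, and so attains equality automatically. Hence ``equality holds'' gives you nothing beyond the combinatorial type you already had. The Hesse arrangement is of exactly the same type as your hypothetical $(15_4)$ arrangement.

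What is actually needed is the specific uniqueness theorem of Barthel--Hirzebruch--H\"ofer. Arrangements with only double and quadruple points and exactly two double points on each line are only the triangle ($3$ lines) and the Hesse arrangement ($12$ lines). This excludes $n=15$, and it is what the paper invokes.
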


\begin{proof}
Let $\mathcal L$ be the corresponding arrangement of $n$ complex projective lines, and let $t_i$ denote the number of points of $\mathcal L$ with multiplicity $i$. Since $\mathcal L$ realizes an $(n_k)$-configuration, the $n$ points of $\mathcal K$ have multiplicity $k$, so $t_k\geq n$.

Assume first that $k\geq 5$. We claim that no singular point of the arrangement has multiplicity greater than $2n/3$. If $p\in\mathcal K$, then the
multiplicity of $p$ is $k$, and the elementary bound
$n\geq k^2-k+1$ implies $k\leq 2n/3$. If $p\notin\mathcal K$ has
multiplicity $m$, then the $m$ lines through $p$ contain pairwise disjoint
subsets of $\mathcal K$, each of size $k$. Hence $mk\leq n$, so
$m\leq n/k\leq 2n/3$.

Thus Langer's inequality applies: for an arrangement of $n$ complex projective lines with no point of multiplicity greater than $2n/3$, one has
\[
\sum_{r\geq 2} r t_r \geq \left\lceil \frac{n^2}{3}+n\right\rceil
\]
by~\cite[Proposition~11.3.1]{Langer:2003}. Combining this with the pair-counting identity
\[
\sum_{r\geq 2} r(r-1)t_r=n(n-1)
\]
gives
\begin{equation*}\label{eq:langer-hirzebruch-form}
t_2+\frac34t_3
\geq
n+\sum_{r\geq 5}\left(\frac{r^2}{4}-r\right)t_r.
\end{equation*}
Since $t_k\geq n$, it follows that
\[
t_2+t_3
\geq
n+n\left(\frac{k^2}{4}-k\right).
\]
Pair counting now gives
\[
\binom n2
=\sum_{r\geq2}\binom r2 t_r
\geq
t_2+t_3+\binom k2 t_k
\geq
n\left(1+\frac{k^2}{4}-k\right)+n\binom k2.
\]
After dividing by $n>0$, this is equivalent to
\[
n\geq \frac32 k^2-3k+3.
\]
Since $n$ is an integer, the claimed bound for $k\geq5$ follows.

It remains to consider $k=4$. We first observe that $t_n=t_{n-1}=0$.
Indeed, if a point $p$ had multiplicity $n$ or $n-1$, then some line
through $p$ would contain at most two singular points, contradicting the
fact that every line contains four points of $\mathcal K$. Hence the
Hirzebruch inequality~\cite{Hirzebruch:1983} applies and gives
\[
t_2+\frac34t_3
\geq
n+\sum_{i\geq5}(i-4)t_i.
\]
In particular,
\[
t_2+t_3
\geq
n+\sum_{i\geq4}(i-4)t_i
\geq n.
\]
Using $t_4\geq n$ and pair counting, we obtain
\[
\binom n2
=\sum_{i\geq2}\binom i2t_i
\geq
t_2+t_3+\binom42t_4
\geq 7n.
\]
Thus $n\geq15$.

Suppose, for contradiction, that $n=15$. Then all the preceding inequalities
are equalities. In particular,
\[
t_2=15,\qquad t_4=15,\qquad t_r=0 \quad \text{ for }r\notin\{2,4\}.
\] It turns out that this is impossible: by the uniqueness theorem of
Barthel--Hirzebruch--Höfer for line configurations with only double and
quadruple points satisfying the condition that each line contains exactly
two double points~\cite[Kapitel~3.1.G, pp.~110--113]{Barthel-Hirzebruch-Hoefer:1987-weighted},
the only possibilities are the triangle arrangement and the Hesse
arrangement, with $3$ and $12$ lines respectively; see
also~\cite[Theorem~27]{Roulleau:2026}. Therefore, $n\geq16$ when $k=4$.\end{proof}

It is noteworthy that, for $k=4$, this geometric bound coincides with the smooth bound of Theorem~\ref{thm:main}.

\bibliographystyle{amsalpha}
\def\MR#1{}
\bibliography{bib}

\end{document}